\documentclass[12pt]{amsart}
\usepackage[T1]{fontenc}
\usepackage{lmodern,microtype,mathtools,amssymb,booktabs,array}
\usepackage{tikz}
\usepackage[hidelinks]{hyperref}
\usepackage[capitalise]{cleveref}
\newtheorem{theorem}{Theorem}
\newtheorem{proposition}[theorem]{Proposition}
\newtheorem{lemma}[theorem]{Lemma}

\theoremstyle{remark}

\newcommand{\E}{\mathbb E}
\newcommand{\R}{\mathbb R}
\newcommand{\B}{\mathbb B}
\newcommand{\norm}[1]{\lVert#1\rVert}
\newcommand{\ip}[2]{\langle#1,#2\rangle}
\DeclareMathOperator{\conv}{conv}
\DeclareMathOperator{\diam}{diam}

\title[Borsuk number in four dimensions]{Borsuk number in four dimensions is at most seven}
\author{A. Arman}
\address{Department of Mathematics, University of Manitoba, Winnipeg, MB, R3T 2N2, Canada}
\email{andrew0arman@gmail.com}

\author{A.\ Bondarenko}
\address{Department of Mathematical Sciences, Norwegian University of Science and
	Technology, NO-7491 Trondheim, Norway}
\email{andriybond@gmail.com}
\thanks{The second author was supported in part by Grant 334466 of the Research Council of Norway.}

\author{A.\ Prymak}
\address{Department of Mathematics, University of Manitoba, Winnipeg, MB, R3T 2N2, Canada}
\email{prymak@gmail.com}
\thanks{The third author was supported by NSERC of Canada Discovery Grant RGPIN-2026-06488.}

\author{D.\ Radchenko}
\address{Institut des Hautes \'Etudes Scientifiques, CNRS, Laboratoire Alexander Grothendieck, 35 route de Chartres, Bures-sur-Yvette 91440, France}
\email{danradchenko@gmail.com}
\thanks{The fourth author was supported by ERC Starting Grant No. 101078782.}
\subjclass[2020]{Primary 52C17; Secondary 52A20, 65G30}
\keywords{Borsuk's problem, Gr\"unbaum's covering problem, covering by balls,
	Jung's theorem, universal covering system, computer-assisted proof}
\hypersetup{pdftitle={Borsuk number in four dimensions is at most seven},
	pdfauthor={A. Arman, A. Bondarenko, A. Prymak, D. Radchenko}}
\begin{document}
	\begin{abstract}
		We prove that every bounded set of diameter one in four-dimensional
		Euclidean space can be covered by seven balls of diameter strictly
		smaller than $0.9999$. In particular, the Borsuk number satisfies
		$b_4\le7$. The proof combines geometric arguments based on contact
		points of the circumball with an exact finite computation. For small
		and large circumradius, the covering centres are constructed analytically. In the remaining
		range, a suitable triangle of contact points reduces the problem to
		a four-parameter family of ball polytopes. The parameter cube is
		subdivided into 23\,494 boxes. For each parameter box, candidate centres
		are found by nonlinear optimization, and the resulting covering is
		verified using exact rational interval arithmetic and paraboloid lifting.
	\end{abstract}
	
	\maketitle
	
	\section{Introduction}
	
	Let $\E^n$ be the $n$-dimensional Euclidean space. The \emph{Borsuk
		number} $b_n$ is the smallest integer such that every bounded subset
	of $\E^n$ of positive diameter can be partitioned into at most $b_n$
	sets of strictly smaller diameter. Equivalently, one may use coverings
	instead of partitions. Borsuk~\cite{Borsuk} asked whether $b_n=n+1$
	for every $n$. The vertices of a regular simplex give $b_n\ge n+1$,
	and equality holds for $n\le3$. Kahn and Kalai~\cite{KK} disproved
	the conjecture in sufficiently high dimensions.
	
	Dimension four is the first dimension in which Borsuk's question
	remains open. Lassak's bound $b_n\le2^{n-1}+1$~\cite{Lassak} gives
	$b_4\le9$. Recently, Tolmachev and Voronov~\cite{TV} proved $b_4\le8$
	by a computer-assisted construction: every set of diameter one in
	$\E^4$ can be partitioned into eight sets of diameter less than
	$0.9999$. We reduce the number of parts to seven and obtain a
	covering by balls of diameter less than the same constant.
	
	For a bounded set $A\subset\E^n$ of diameter one, let $N(A)$ be the
	smallest number of Euclidean balls of diameter strictly smaller than
	one whose union contains $A$, and put
	\[
	g_n=\sup_{\diam A=1}N(A).
	\]
	This is the strict-diameter version of Gr\"unbaum's ball-covering
	number~\cite{Grunbaum}. Evidently, $b_n\le g_n$. Our main result is
	the following.
	
	\begin{theorem}\label{thm:main}
		Every bounded set $K\subset\E^4$ of diameter one can be covered by
		seven balls of a common diameter strictly smaller than $0.9999$.
		In particular, $b_4\le g_4\le7$.
	\end{theorem}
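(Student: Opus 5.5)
We reduce first to a convenient class of sets. Replacing $K$ by a completion (a body of constant width one containing $K$) only makes the covering harder, so we may assume $K$ has constant width one. In particular $K=\bigcap_{x\in K}B(x,1)$ is a ball polytope in the limit sense. Let $R$ be the circumradius of $K$ and $c$ its circumcentre. By Jung's theorem $R\le\sqrt{2/5}\approx0.632$. For constant width one we also have $R\ge 1-\sqrt{2/5}$, since the inradius and circumradius sum to one. The set $T=K\cap\partial B(c,R)$ of contact points has $c\in\conv T$, so by Carathéodory we can select at most five contact points whose hull contains $c$. We split into three regimes according to $R$.

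\emph{Small $R$.} If $R<0.9999/2\cdot\beta$ for a suitable explicit factor $\beta>1$, we cover $B(c,R)$ analytically. We place one small central ball and six balls whose centres form a symmetric configuration, for instance related to the vertices of a cross-polytope or a $24$-cell projection, at radius $tR$. Elementary inequalities then show that every point of $B(c,R)\cap K$ lies within $0.9999/2$ of some centre. Here the constant width of $K$ is used to exclude the region near the sphere that is far from all contact points.

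\emph{Large $R$, close to Jung's bound.} Here the contact points must nearly form a regular simplex, with pairwise distances close to one. We make this quantitative: every point of $K$ lies in $\bigcap_i B(t_i,1)$ for near-simplex vertices $t_i$. That intersection is a Reuleaux-type simplex. We cover it by five balls near the vertices together with two balls covering the central part, checking each piece by explicit distance estimates that are perturbative in $R$.

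\emph{Intermediate $R$, the main obstacle.} From the contact points we choose a triangle $t_1,t_2,t_3\in T$ whose angles at $c$ are controlled. Such a triangle exists because $c\in\conv T$ and all mutual distances are at most one. Up to rigid motion, the triangle together with $R$ is described by four parameters. Then $K\subset P:=B(c,R)\cap\bigcap_{i=1}^3B(t_i,1)$, which is a ball polytope depending on these four parameters. We subdivide the compact parameter box into many small boxes. For each box we find seven centres by numerical optimization, minimizing the maximal distance from $P$ to the nearest centre. We then certify the covering uniformly over the box. For this, each point of $P$ must lie in some ball $B(z_j,r)$ with $r<0.49995$. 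We verify it using paraboloid lifting $x\mapsto(x,|x|^2)$, under which balls become half-spaces. The condition ``$P\setminus\bigcup_j B(z_j,r)=\emptyset$'' then becomes the infeasibility of finitely many systems of linear inequalities on the lifted paraboloid. We check these in exact rational interval arithmetic, with the parameters ranging over the box.

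The hardest part will be making this certification robust. The boxes must be fine enough for the margins to survive interval enlargement, yet coarse enough to keep the count feasible, about $2\cdot10^4$ boxes. The boundaries between the three regimes must also match up with explicit constants.
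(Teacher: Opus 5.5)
Your outline has the paper's architecture: cover the whole circumball for small $R$; use a triangle of contact points, a four-parameter family of ball polytopes, box subdivision and an exact lifted certificate in the middle range; and use five balls tied to the contact points plus two more for large $R$. But none of the three regimes is actually carried out. The point you defer at the end, that the ranges meet with explicit constants, is the substance of the theorem.

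\textbf{Large $R$.} Your planned argument would not reach the intermediate range. Near-regularity of the contact simplex holds only in the limit $r=R^2\to\tfrac25$. At $r=0.384$ there is already a contact simplex with nine unit edges and one edge of length about $0.69$. Take $p_3p_4p_5$ equilateral of side $1$ and $p_1,p_2$ at distance $1$ from all three; the circumcentre lies strictly between the centroid of $p_3p_4p_5$ and the midpoint of $p_1p_2$, hence inside the simplex. So a perturbative estimate covers only a thin neighbourhood of $\tfrac25$. You give no reason why the three-point outer sets, which drop two of the five distance constraints, stay coverable up to that neighbourhood.

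The paper's argument is non-perturbative and works for all $r\ge r_0=0.38401$:
\begin{itemize}
\item The diameter condition gives $\Phi(v)=\sum_i\lambda_i\ip{v}{p_i}^2\le(\tfrac12-r)\norm{v}^2$.
\item Since $\sum_j\Phi(e_j)=r$ for every orthonormal basis, a minimizer $e$ of $\Phi$ on the unit sphere satisfies $\Phi(x)\ge\tfrac{3r-1}{2}\norm{x}^2$ whenever $|\ip{x}{e}|\le\norm{x}/\sqrt2$.
\item The constraints $\ip{x}{p_i}\ge-\tfrac12(1-r-\norm{x}^2)$ turn this into a lower bound on $\max_i\ip{x}{p_i}$, which puts $x$ within $\rho$ of some $\tfrac9{20}p_i$.
\end{itemize}
The two extra balls, centred at $\pm\sqrt{r/2}\,e$, cover the double cone around the flat direction $e$ of the contact configuration. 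They are not for a ``central part''; the five balls already cover that.

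\textbf{Intermediate $R$.} ``Such a triangle exists because $c\in\conv T$ and all mutual distances are at most one'' is not an argument, and ``angles at $c$ are controlled'' does not define a domain you can subdivide. The missing ingredient is a nearest-facet lemma. If the centre of a sphere of radius $R$ lies in the interior of an inscribed $d$-simplex, some barycentric coordinate of the centre is at most $\tfrac1{d+1}$. Hence the nearest facet is at distance at most $R/d$, and the foot of the perpendicular is that facet's circumcentre and lies in its relative interior. Applying this with $d=4$ and then $d=3$ gives an acute triangle of contact points with $\tfrac56r\le\tau\le\min\{r,\tfrac13\}$. This bound on the distance $\sqrt{r-\tau}$ from the centre to the triangle's plane is what confines the four parameters. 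Without it your family contains small triangles far from the centre, whose outer sets are essentially $\overline B(c,R)\cap\overline B(t_1,1)$.

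\textbf{Certificate.} The lifted system (circumball, three contact constraints, seven ``outside the ball'' constraints, all linear in $(x,q)$) is in general feasible, so the certificate is not an infeasibility check. What must be shown is that its solution set lies strictly above the paraboloid $q=\norm{x}^2$. That requires three things:
\begin{itemize}
\item a single rational outer set valid for every parameter in the box (interval enclosures of $r$ and of the contact points, with the distance constraints relaxed accordingly);
\item boundedness of the lifted polyhedron, which holds when five centres have the origin in the interior of their hull;
\item convexity of $\norm{x}^2-q$, which reduces the check to the vertices.
\end{itemize}

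\textbf{Small $R$.} No constant width is needed here. Seven balls of radius $\sqrt{5r/6}$, centred at $\sqrt{r/6}$ times the vertices of a regular triangle in a plane and $\pm e_3,\pm e_4$, cover the whole circumball, which suffices for $r\le0.299$. A central ball is wasted: since $R\ge\tfrac12>\rho$, it cannot reach the boundary sphere.
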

	
	For various estimates on $b_n$ and $g_n$, see
	\cite{ABP,ABPR-Grunbaum,Bondarenko,BL,AI-Borsuk,AI-Borsuk-improved,Grinsztajn,JB,OpenAI-Borsuk,Raigorodskii,Schramm}. For a broader account of Borsuk's problem and related questions,
	we refer to Kalai's survey~\cite{Kalai}.

	\subsection{Outline of the proof}
	We first replace $K$ by its compact closure and translate the centre
	of its \emph{circumball}, the smallest closed ball containing it, to
	the origin. Let $r$ denote the squared circumradius. Jung's
	theorem~\cite{Jung} gives $r\le\tfrac25$. The proof uses three ranges:
	\[
	\begin{array}{c@{\qquad}l}
		0\le r\le\tfrac{299}{1000}
		& \text{a cover of the whole circumball},\\[6pt]
		\tfrac{299}{1000}\le r\le\tfrac{38401}{100000}
		& \text{a finite covering computation},\\[6pt]
		\tfrac{38401}{100000}\le r\le\tfrac25
		& \text{a cover determined by five contact points}.
	\end{array}
	\]

	For small circumradius, an explicit arrangement of seven centres
	covers the whole circumball.

	The information supplied by the circumball is very useful for the
	covering problem. The centre of the circumball is a positive convex
	combination of at most five points of $K$ on its boundary, which we
	subsequently refer to as the \emph{contact points}. We write $B(c,R)$
	and $\overline B(c,R)$ for the open and closed balls with centre $c$
	and radius $R$, respectively. Each contact point $p$ gives the
	additional restriction $K\subset\overline B(p,1)$.

	We now consider the intermediate circumradius range. By successively
	choosing a facet nearest to the circumcentre, we find three contact
	points $P_1,P_2,P_3$ forming an acute triangle whose squared
	circumradius $\tau$ satisfies
	\[
	\tfrac56 r\le\tau\le\min\{r,\tfrac13\}.
	\]
	Keeping the circumball and these three distance restrictions gives
	\[
	K\subset H(r,P_1,P_2,P_3):=
	\overline B(0,\sqrt r)\cap\bigcap_{i=1}^3\overline B(P_i,1).
	\]
	These outer sets depend, up to congruence, on four parameters.
	Every set needed in the intermediate range lies in a member of this
	family. This is a universal covering system, in the terminology used
	in~\cite{TV}, see also~\cite{ABPR} for the relation between universal
	covers and Borsuk's problem.

	For each parameter box, we seek seven rational centres such that
	the same seven balls cover every outer set generated by parameters
	in that box. Rational interval bounds give a common outer
	approximation containing all these sets. The substitution
	$q=\norm{x}^2$ turns the conditions that a point belongs to this
	approximation and is missed by the seven open balls into twelve
	linear inequalities in $(x,q)\in\E^5$. It is sufficient to show that
	the resulting polytope lies strictly above the paraboloid
	$q=\norm{x}^2$. Convexity reduces this to a check at its vertices,
	performed with exact rational arithmetic.

	Starting with the whole parameter cube, we try proposed centre
	configurations on each box. If none of the tested configurations satisfies the desired covering condition, we
	bisect one of the box's coordinate intervals and repeat the procedure on
	both smaller boxes. Thus subdivision continues where the covering
	test has not yet succeeded. The resulting subdivision consists of
	$23\,494$ boxes, with a verified covering on every box.
	

	For large circumradius, no subdivision is needed. We choose a
	hyperplane through the origin minimizing the weighted sum of squared
	distances from the five contact points. Scalar projection estimates
	then give a covering away from two cones of half-angle $\pi/4$ about
	the normal by five balls centred at $\tfrac9{20}p_1,\ldots,\tfrac9{20}p_5$,
	where $p_1,\ldots,p_5$ are the contact points. Thus each centre is
	obtained by moving a contact point towards the origin by the same
	factor. Two further balls cover the portions of the cones inside
	the circumball.

	Section~\ref{sec:contacts} proves the reduction to three contact
	points. Section~\ref{sec:analytic} treats the two analytic ranges.
	Sections~\ref{sec:parameters} and~\ref{sec:condition} describe the
	parameter family and a sufficient condition for its coverage.
	Section~\ref{sec:finite} states the finite verification, completes
	the proof, and explains the numerical search used to obtain the
	certificate. The exact interval rules, verification instructions,
	and computational data are supplied separately at
	\begin{center}
		\url{https://github.com/andriyprm/4d7balls}.
	\end{center}
	\noindent
	The exact interval rules in \texttt{VERIFICATION.md} form part of
	the computer-assisted proof. The supplement also gives instructions
	for reproducing the computation from the supplied data.
	
	\subsection{Notation}
	We write $\ip{\cdot}{\cdot}$ and $\norm{\cdot}$ for the Euclidean
	inner product and norm. We use $\B^n=\overline B(0,1)$ for the closed
	unit ball in $\E^n$. The symbols $\diam A$ and
	$\conv A$ denote the diameter and convex hull of $A$.
	A simplex is the convex hull of affinely independent points. A facet
	is a face of codimension one. The relative interior is the interior
	in the affine span. The circumcentre of a simplex is the unique
	point in its affine span equidistant from its vertices, and their
	common distance is its circumradius. Barycentric coordinates are
	the coefficients, with sum one, in the affine representation of
	a point by the vertices. A universal covering system for a class
	of sets is a family such that every set in the class lies in an
	isometric copy of a member of the family.
	
	Throughout the proof, we use the constants
	\[
		\rho=\tfrac{9999}{20000},\qquad \sigma=\rho^2,
		\qquad r_0=\tfrac{38401}{100000}.
	\]
	It suffices to cover each compact set of diameter $1$ by seven open balls of radius
	$\rho$. Indeed, for their centres $c_1,\ldots,c_7$, the continuous
	function $x\mapsto\min_j\norm{x-c_j}$ then has maximum strictly
	smaller than $\rho$. The same centres therefore give a cover by
	closed balls of a common diameter strictly smaller than
	$2\rho=0.9999$.
	
	\section{Contact points of the circumball}\label{sec:contacts}
	
	We begin with the standard contact-point proof of Jung's inequality,
	including the information that will be needed later.
	
	\begin{lemma}\label{lem:contacts}
		Let $K\subset\E^4$ be compact, with $\diam K=1$, and suppose that
		$\overline B(0,\sqrt r)$ is its circumball. There exist
		affinely independent points $p_1,\ldots,p_k\in K$, $k\le5$, and positive
		numbers $\lambda_1,\ldots,\lambda_k$ such that
		\begin{equation}\label{eq:balance}
			\norm{p_i}^2=r,\qquad
			\sum_{i=1}^k\lambda_i=1,\qquad
			\sum_{i=1}^k\lambda_i p_i=0.
		\end{equation}
		Moreover,
		\begin{equation}\label{eq:jung}
			r=\sum_{i<j}\lambda_i\lambda_j\norm{p_i-p_j}^2
			\le\tfrac{1}{2}\left(1-\sum_i\lambda_i^2\right)
			\le\frac{k-1}{2k}\le\tfrac{2}{5}.
		\end{equation}
	\end{lemma}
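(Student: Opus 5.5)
Let $S=K\cap\{\norm{x}^2=r\}$ be the set of contact points. The plan has two parts: first produce the balanced representation \eqref{eq:balance}, then derive \eqref{eq:jung} from it by a short computation.

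\emph{The representation.} The key claim is that $0\in\conv S$. Suppose not. Then $S$ is compact, so there is a unit vector $u$ and $\delta>0$ with $\ip{u}{s}\ge\delta$ for all $s\in S$. By compactness of $K$ there is a neighbourhood $U$ of $S$ such that $\ip{u}{x}\ge\delta/2$ for $x\in K\cap U$, and $\max_{K\setminus U}\norm{x}^2<r$. For small $\varepsilon>0$ we then have:
\begin{itemize}
\item for $x\in K\cap U$, $\norm{x-\varepsilon u}^2=\norm{x}^2-2\varepsilon\ip{u}{x}+\varepsilon^2\le r-\varepsilon\delta+\varepsilon^2<r$;
\item for $x\in K\setminus U$, $\norm{x-\varepsilon u}^2<r$ still holds by continuity.
\end{itemize}
Hence $K$ lies in a ball of radius smaller than $\sqrt r$ centred at $\varepsilon u$, contradicting minimality of the circumball.

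Next apply Carathéodory's theorem in its affinely independent form. Any representation of $0$ as a convex combination of points of $S$ can be reduced until the points used are affinely independent. This gives $k\le5$ affinely independent points $p_i$ with positive weights $\lambda_i$ summing to $1$ and $\sum\lambda_ip_i=0$. Moreover $\norm{p_i}^2=r$ because each $p_i\in S$. The only delicate point in this part is the compactness bookkeeping in the perturbation argument above, which is routine.

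\emph{The chain of inequalities.} The identity comes from expanding, using $\sum\lambda_i=1$ and $\sum\lambda_ip_i=0$:
\[
\sum_{i<j}\lambda_i\lambda_j\norm{p_i-p_j}^2=\tfrac12\sum_{i,j}\lambda_i\lambda_j\bigl(\norm{p_i}^2+\norm{p_j}^2-2\ip{p_i}{p_j}\bigr)=r-\Bigl\|\sum_i\lambda_ip_i\Bigr\|^2=r.
\]
Each $\norm{p_i-p_j}\le\diam K=1$, so the sum is at most $\sum_{i<j}\lambda_i\lambda_j=\tfrac12\bigl(1-\sum\lambda_i^2\bigr)$. By Cauchy--Schwarz, $\sum\lambda_i^2\ge1/k$, which yields the bound $\frac{k-1}{2k}$. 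Finally, $\frac{k-1}{2k}$ is increasing in $k$ and $k\le5$, so it is at most $\tfrac25$.

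No step is a real obstacle. The point needing the most care is the claim $0\in\conv S$, specifically the uniform separation and the continuity bounds.
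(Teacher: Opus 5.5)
Your proposal is correct and follows essentially the same route as the paper. Both arguments use separation plus a small shift of the centre to show $0\in\conv S$, then a minimal-support (Carath\'eodory) reduction to at most five affinely independent contact points with positive weights, and finally the expansion identity, the diameter bound and $\sum_i\lambda_i^2\ge 1/k$; your version just writes out the compactness bookkeeping (the neighbourhood $U$ and the uniform bounds) that the paper only sketches.
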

	\begin{proof}
		The origin belongs to the convex hull of the contact points
		$K\cap\partial B(0,\sqrt r)$. Otherwise strict separation gives a
		direction in which a small movement of the centre decreases the
		distance to every contact point. By compactness, the improvement is uniform
		in a neighbourhood of the contact set, while the complement of that neighbourhood in $K$ has a
		uniform separation from the sphere. A sufficiently small movement would
		therefore decrease the circumradius, a contradiction.
		
		Choose a representation of zero as a convex combination of the contact points
		with the smallest possible support. The weights are positive and the
		points are affinely independent: an affine dependence would allow a
		perturbation of the weights until one became zero. Hence $k\le5$.
		Expanding squared distances gives the equality in~\eqref{eq:jung}.
		The diameter bound gives the first inequality, and
		$\sum_i\lambda_i^2\ge1/k$ gives the second.
	\end{proof}
	
	The next lemma allows us to discard contact points while preserving a large
	circumradius. The nearest facet is useful for two reasons at once: its
	distance from the circumcentre is small, and the perpendicular foot
	lies inside the facet.
	
	\begin{lemma}\label{lem:facet}
		Let $d\ge2$, and let $S$ be a $d$-simplex whose vertices lie on a
		sphere of radius $R$ centred at the origin. Suppose that the origin
		belongs to the relative interior of $S$. Then $S$ has a facet $F$
		such that the circumcentre of $F$ belongs to the relative interior
		of $F$ and the squared circumradius of $F$ is at least
		$(1-d^{-2})R^2$.
	\end{lemma}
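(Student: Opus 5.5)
Choose $F$ to be a facet of $S$ nearest to the origin, i.e.\ one minimizing the distance $h_F$ from $0$ to its affine hull. For each facet $F$, the affine hull of $F$ meets the sphere in a $(d-2)$-sphere containing the vertices of $F$. The centre of that sphere is the orthogonal projection $c_F$ of the origin onto $\operatorname{aff}F$, and its radius is $\sqrt{R^2-h_F^2}$. Hence $c_F$ is the circumcentre of $F$, and its squared circumradius is $R^2-h_F^2$. The claim therefore splits into two parts. First, we need $h_F\le R/d$ for the nearest facet. Second, we need $c_F$ to lie in the relative interior of $F$ for the nearest facet.

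For the bound on $h_F$, write $0=\sum_i\mu_i v_i$ with all $\mu_i>0$ and $\sum\mu_i=1$; this uses that $0$ lies in the relative interior. Let $F_i$ be the facet opposite $v_i$, and let $h_i$ be the distance from $0$ to its hyperplane. Let $H_i$ be the distance from $v_i$ to that hyperplane. Barycentric coordinates are ratios of distances, so $\mu_i=h_i/H_i$. Since $v_i$ lies on the sphere and the hyperplane is at distance $h_i$ from the centre on the other side, we have $H_i\le R+h_i$. Summing gives $1=\sum_i h_i/H_i\ge\sum_i h_i/(R+h_i)$. If every $h_i>R/d$, then each term exceeds $(R/d)/(R+R/d)=1/(d+1)$, and the $d+1$ terms sum to more than $1$, a contradiction. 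So the minimal $h_F$ satisfies $h_F\le R/d$. This gives squared circumradius at least $R^2(1-d^{-2})$.

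For the relative interior claim, I would use the nearest-facet property. Suppose $c_F$ is not in the relative interior of $F$. Then the segment from $0$ to $c_F$, which is perpendicular to $\operatorname{aff}F$, must exit $S$ through some other facet $G$ before or at reaching $\operatorname{aff}F$. The reason is that $0$ is interior to $S$ while $c_F$ lies outside the relative interior of $F$. So the hyperplane of $G$ meets the open segment $(0,c_F]$ at some point $y$. Then $h_G\le\norm{y}\le\norm{c_F}=h_F$.

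The delicate part is making this strict, or handling the boundary case where $c_F$ lies on the relative boundary of $F$. In that case the segment ends on a lower-dimensional face $F\cap G$. If $h_G=h_F$, then the foot of the perpendicular to $G$ is also $c_F$, and the hyperplanes of $F$ and $G$ would have the same unit normal, so they coincide. That is impossible for distinct facets. Hence $h_G<h_F$, contradicting minimality.

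I expect this strictness bookkeeping to be the only real obstacle. The case $d=2$, a triangle, should be checked to fit the same argument: there the facets are edges, and the circumcentre of an edge is its midpoint, which is always in the relative interior. So for $d=2$ only the distance bound matters.
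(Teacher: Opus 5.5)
Your proof is correct and follows the paper's route. You choose a nearest facet and bound its distance by $R/d$ using the barycentric coordinates of the origin together with $\norm{v_i}=R$; your $\mu_i=h_i/H_i\ge h_i/(R+h_i)$ is exactly the paper's $h_i\le\frac{\lambda_i}{1-\lambda_i}R$. You then get the relative-interior property from minimality and the fact that distinct facets have distinct unit normals. The paper phrases that last step directly rather than by contradiction, checking $\ip{n_j}{hn_i}=h\ip{n_j}{n_i}<h\le h_j$ for every $j\ne i$, which removes the boundary-case bookkeeping you were worried about.
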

	\begin{proof}
		We can assume $S$ is full-dimensional. Write the facet inequalities of $S$
		as $\ip{n_i}{x}\le h_i$, where $n_i$ is an outward unit normal,
		$h_i>0$, and the facet $F_i$ is opposite the vertex $p_i$. Let $\lambda_i>0$
		be the barycentric coordinates of the origin. Since
		$\ip{n_i}{p_j}=h_i$ for $j\ne i$, we have
		\[
		\ip{n_i}{p_i}=-\frac{1-\lambda_i}{\lambda_i}h_i,
		\qquad h_i\le\frac{\lambda_i}{1-\lambda_i}R.
		\]
		Some $\lambda_i$ is at most $1/(d+1)$, so
		$h:=\min_i h_i\le R/d$. Choose $i$ with $h_i=h$. For $j\ne i$,
		\[
		\ip{n_j}{hn_i}=h\ip{n_j}{n_i}<h\le h_j.
		\]
		The strict inequality holds because two distinct facets cannot have
		the same outward unit normal. Since $\ip{n_i}{hn_i}=h_i$, the point
		$hn_i$ lies in the relative interior of $F_i$. The squared distance
		from $hn_i$ to every vertex of $F_i$ is $R^2-h^2$. Therefore $hn_i$
		is the circumcentre of $F_i$, and the squared circumradius of $F_i$
		is at least $(1-d^{-2})R^2$.
	\end{proof}
	
	\begin{proposition}\label{prop:triangle}
		Under the assumptions of Lemma~\ref{lem:contacts}, if $r>\tfrac{1}{4}$, then
		three contact points $P_1,P_2,P_3$ form an acute triangle with squared circumradius $\tau$
		satisfying
		\begin{equation}\label{eq:tau}
			\tfrac{5}{6}r\le\tau\le\min\{r,\tfrac{1}{3}\}.
		\end{equation}
		The circumcentre of the triangle is the orthogonal projection of
		the origin onto the affine plane of the triangle. After an orthogonal
		change of coordinates, these points can be written as
		\[
			P_i=(p_i',h,0)\in\E^2\oplus\E\oplus\E,
			\qquad \norm{p_i'}^2=\tau,\qquad h=\sqrt{r-\tau}.
		\]
		In these coordinates,
		\begin{equation}\label{eq:outer-H}
			K\subset H(r,P_1,P_2,P_3):=
			\overline B(0,\sqrt r)\cap\bigcap_{i=1}^3\overline B(P_i,1).
		\end{equation}
	\end{proposition}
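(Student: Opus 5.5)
We start from Lemma~\ref{lem:contacts}: the contact points $p_1,\dots,p_k$ are affinely independent and lie on the sphere of radius $\sqrt r$. The origin lies in the relative interior of their simplex, with barycentric coordinates $\lambda_i>0$. By~\eqref{eq:jung}, $r\le\frac{k-1}{2k}$. For $k=2$ this gives $r\le\frac14$, so $r>\frac14$ forces $k\ge3$.

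The plan is to descend from the $(k-1)$-simplex to a triangle by repeated use of Lemma~\ref{lem:facet}. At each step the current simplex has vertices on a sphere centred at its circumcentre, and that circumcentre lies in its relative interior. At the start this is the origin with radius $\sqrt r$. For a facet chosen by Lemma~\ref{lem:facet}, the circumcentre of the facet is in the relative interior of the facet, so the hypotheses hold again inside the facet's affine span. Each circumcentre is the orthogonal projection of the previous one onto the smaller affine span, because it is the foot of the perpendicular $hn_i$ from the proof. Composing these projections, the final circumcentre is the orthogonal projection of the origin onto the plane of the triangle.

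For the lower bound we track the squared radius. If $k=3$, no descent is needed and $\tau=r$. If $k=4$, one step with $d=3$ gives $\tau\ge\frac89 r\ge\frac56 r$. If $k=5$, two steps with $d=4$ and then $d=3$ give $\tau\ge\frac{15}{16}\cdot\frac89\,r=\frac56 r$.

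Acuteness follows because the circumcentre of the triangle lies in its relative interior. For the upper bound, $\tau\le r$ holds since the triangle's circumradius is the distance from the projected origin to the vertices, and $\tau=r-h^2$. For $\tau\le\frac13$, the sides have length at most $1$ since $\diam K=1$. An acute triangle with circumcentre inside satisfies~\eqref{eq:jung} in dimension $2$: applying Lemma~\ref{lem:contacts}'s identity to the triangle's own barycentric weights gives $\tau\le\frac{1}{2}\cdot\frac23=\frac13$. Equivalently, $\tau=\sum\mu_i\mu_j\norm{P_i-P_j}^2\le\frac12(1-\sum\mu_i^2)\le\frac13$.

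For the coordinates, we rotate so that the plane of the triangle is $\{(y,h,0)\}$ with $y\in\E^2$, and the projected origin is $(0,h,0)$. Then $h$ is the distance from the origin to the plane, and $\norm{p_i'}^2=\tau$, $h^2=r-\tau$. Finally,~\eqref{eq:outer-H} holds because $K$ lies in its circumball and $\norm{x-P_i}\le1$ for all $x\in K$, since each $P_i\in K$.

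The only delicate point is the inductive bookkeeping: each facet step must keep the hypotheses of Lemma~\ref{lem:facet}, and the projections must compose correctly. Everything else is direct.
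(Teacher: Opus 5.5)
Your proposal is correct and follows the paper's proof: the same descent through Lemma~\ref{lem:facet} with $d=4$ and then $d=3$, giving $\tau\ge\tfrac{15}{16}\cdot\tfrac89 r$, and the same use of the Jung identity on the triangle to get $\tau\le\tfrac13$. The only minor difference is that you identify the circumcentre as the projection of the origin by composing the perpendicular feet $hn_i$, whereas the paper observes directly that this projection is equidistant from three vertices of squared norm $r$; both arguments are valid.
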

	\begin{proof}
		By~\eqref{eq:jung}, there are at least three contact points in the minimal
		representation. If there are three, their circumcentre is the origin,
		which lies inside the triangle, and $\tau=r$.
		If there are four, apply Lemma~\ref{lem:facet} with $d=3$ to obtain
		a triangular facet with squared circumradius at least $8r/9$.
		If there are five, apply Lemma~\ref{lem:facet} with $d=4$ to obtain
		a tetrahedral facet. Translate the circumcentre of this tetrahedron
		to the origin and apply the same lemma with $d=3$.
		For the triangle obtained in this way we have
		\[
		\tau\ge\tfrac{8}{9}\tfrac{15}{16}r=\tfrac{5}{6}r.
		\]
		In each case the circumcentre belongs to the relative interior of
		the triangle, so the triangle is acute. Applying the
		identity~\eqref{eq:jung} to the three vertices of the triangle,
		after translating the circumcentre to the origin, gives
		$\tau\le\tfrac{1}{3}$.
		
		Let $c$ be the orthogonal projection of the origin onto the affine
		plane of the triangle. Since all three vertices have squared norm
		$r$, the point $c$ is equidistant from these vertices and hence is
		the circumcentre of the triangle. Putting $h=\norm{c}$, Pythagoras
		gives $h^2+\tau=r$, and in particular $\tau\le r$.

		An orthogonal change of coordinates sends the plane through the origin
		parallel to the triangle to the first coordinate plane and $c$ to $(0,0,h,0)$,
		giving the stated coordinates. Finally, $K\subset\overline B(0,\sqrt r)$,
		and the diameter condition gives $\norm{x-P_i}\le1$ for every
		$x\in K$ and $i=1,2,3$. This proves~\eqref{eq:outer-H}.
	\end{proof}
	
	\section{Analytic covers}\label{sec:analytic}
	
	\subsection{Small circumradius}\label{sec:small}
	
	When the circumball is small enough, the contact points are unnecessary.
	The following construction divides the four coordinate directions
	between a regular triangle and two antipodal pairs.
	
	\begin{proposition}\label{prop:small}
		For every $r\ge0$, the ball $\overline B(0,\sqrt r)\subset\E^4$
		can be covered by seven closed balls of radius $\sqrt{5r/6}$.
		If $r\le\tfrac{299}{1000}$, this radius is strictly smaller than $\rho$.
	\end{proposition}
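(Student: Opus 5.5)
By homogeneity it suffices to treat $r=1$: if seven closed balls of radius $\sqrt{5/6}$ cover $\B^4$, scaling by $\sqrt r$ gives the general statement. (For $r=0$ the claim is trivial.) Following the hint before the statement, I split $\E^4=\E^2\oplus\E\oplus\E$ and write $x=(y,z_3,z_4)$. Fix three unit vectors $u_1,u_2,u_3\in\E^2$ at mutual angles $2\pi/3$ and a parameter $a>0$. The centres will be
\[
a u_k\ (k=1,2,3),\qquad \pm a e_3,\qquad \pm a e_4,
\]
with the tentative choice $a=1/\sqrt6$ for both groups. The proof is then an elementary two-case estimate.

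For $x\in\B^4$ put $s=\norm{y}$. The three vectors $u_k$ form a regular triangle, so $\max_k\ip{y}{u_k}\ge s/2$. Using $\norm{x}^2\le1$, the nearest triangle centre satisfies
\[
\min_k\norm{x-au_k}^2\le 1-2a\max_k\ip{y}{u_k}+a^2\le 1-as+a^2 .
\]
Similarly, put $t=\max\{|z_3|,|z_4|\}$ and choose the sign matching the larger of $z_3,z_4$ in absolute value. The nearest of the four antipodal centres has squared distance at most $1-2at+a^2$. Moreover $t^2\ge(z_3^2+z_4^2)/2$, and we may assume $z_3^2+z_4^2\ge1-s^2$: the worst case is $\norm{x}=1$, and for interior points one can compare with the boundary point radially outward in the $(z_3,z_4)$-plane, or simply note that the bound $1-\ldots$ already used $\norm x^2\le1$. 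I would check this reduction carefully, and if it is awkward, restrict to the sphere first, since each distance bound above depends only on $s$ and $t$ through $\norm{x}^2\le 1$.

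With $a=1/\sqrt6$, the first bound is at most $5/6$ exactly when $s\ge (a^2+1/6)/a=2/\sqrt6$, i.e.\ when $s^2\ge 2/3$. The second bound is at most $5/6$ exactly when $t\ge(a^2+1/6)/(2a)=1/\sqrt6$, i.e.\ when $t^2\ge1/6$. This holds whenever $(1-s^2)/2\ge1/6$, i.e.\ when $s^2\le2/3$. The two ranges of $s$ meet precisely at $s^2=2/3$, so every point of $\B^4$ lies within $\sqrt{5/6}$ of some centre. The main delicacy is exactly this matching: the parameters must be chosen so that the two cases abut with no gap. One also needs the reduction $z_3^2+z_4^2\ge 1-s^2$ to be legitimate for interior points; I would handle it by noting that the covering radius of a finite centre set over a ball is attained on the boundary sphere when all centres lie in the ball, or else by a direct monotonicity remark.

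Finally, for $r\le \tfrac{299}{1000}$ we get $\tfrac56 r\le \tfrac{1495}{6000}<0.24917$. On the other hand $\sigma=\rho^2=(9999/20000)^2>0.2499$, so $\sqrt{5r/6}<\rho$, as claimed.
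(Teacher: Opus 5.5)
Your centres are the paper's centres (after scaling to $r=1$). Your two-case estimate is correct on the unit sphere, and so is the final comparison $\tfrac56\cdot\tfrac{299}{1000}<0.24917<\sigma$.

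The gap is the passage from the sphere to the solid ball, which you flag but do not close. Once $\norm{x}^2$ is replaced by $1$, both of your bounds equal $1+a^2=\tfrac76$ at $x=0$. Moreover, the case split relies on $z_3^2+z_4^2\ge1-s^2$, which for $x$ in the ball holds only when $\norm{x}=1$. So your argument does not handle interior points.

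The principle you propose to invoke is false: it is not true that, for centres inside a ball, the covering radius of the ball is attained on its boundary sphere. In $\E^1$ the centres $\pm1$ cover $\{\pm1\}$ with radius $0$, yet $0$ is at distance $1$ from both. The radial comparison in the $(z_3,z_4)$-plane is also not yet an argument. Moving inward decreases $t$, so knowing that the boundary point $x'$ is close to an antipodal centre does not transfer to $x$.

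The repair is to keep $\ell=\norm{x}\le1$ in place of $1$. Write the centres as $a d_j$ with unit vectors $d_j$, and $x=\ell\hat x$. Then
\[
\min_j\norm{x-ad_j}^2=\ell^2+a^2-2a\ell\max_j\ip{\hat x}{d_j},
\]
which is convex in $\ell$ because all seven centres have the same norm $a$. Hence its maximum over $\ell\in[0,1]$ is attained at $\ell=0$, where it equals $a^2=\tfrac16$, or at $\ell=1$, which is your sphere case. So the correct reduction is to the sphere together with the origin, not to the sphere alone.

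This is essentially the paper's proof. The paper also merges your two cases into the single inequality $\max\{\norm{y}/2,|z_3|,|z_4|\}\ge\norm{x}/\sqrt6$, which is immediate from $\norm{x}^2=4(\norm{y}/2)^2+z_3^2+z_4^2$. It then bounds the convex quadratic $\ell^2+\tfrac16-\tfrac{\ell}{3}$ by its endpoint values $\tfrac16$ and $\tfrac56$ on $[0,1]$.
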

	\begin{proof}
		In an orthogonal decomposition $\E^4=\E^2\oplus\E\oplus\E$,
		take three unit vectors $d_1,d_2,d_3$ forming a regular triangle in
		the first plane, and take $d_4,d_5=\pm e_3$ and
		$d_6,d_7=\pm e_4$. For $x=(u,v,w)$ we have
		\[
		\max_i\ip{x}{d_i}\ge\max\{\norm{u}/2,|v|,|w|\}
		\ge\frac{\norm{x}}{\sqrt6}.
		\]
		Put $c_i=\sqrt{r/6}\,d_i$. If $t=\norm{x}\in[0,\sqrt r]$, then
		\[
		\min_i\norm{x-c_i}^2
		\le t^2+\frac r6-\frac{\sqrt r}{3}t
		\le\frac{5r}{6}.
		\]
	\end{proof}
	
	\subsection{Large circumradius}\label{sec:large}
	
	The five contact points of a circumball with squared radius close
	to $\tfrac{2}{5}$ cannot be concentrated near a proper subspace. We shall express
	this using only their scalar projections. Choose a hyperplane through
	the origin which minimizes the weighted sum of the squared distances
	of the contact points to the hyperplane. Outside two cones about the normal to this hyperplane,
	the projections force one of the five rescaled contact points to be a
	suitable covering centre. The portion of each cone inside the
	circumball is covered by one additional ball, see Figure~\ref{fig:cones}.
	
	The following estimate is the main consequence of the diameter
	condition and bounds the weighted sum of squared projections in
	every direction.
	
	\begin{lemma}\label{lem:projections}
		Suppose that $p_1,\ldots,p_5\in\E^4$ and positive numbers
		$\lambda_1,\ldots,\lambda_5$ satisfy~\eqref{eq:balance}, and that
		$\norm{p_i-p_j}\le1$ for all $i,j$. Put
		\[
		\Phi(v)=\sum_{i=1}^5\lambda_i\ip{v}{p_i}^2,
		\qquad U=\tfrac{1}{2}-r.
		\]
		Then
		\[
			\Phi(v)\le U\norm{v}^2,\qquad v\in\E^4.
		\]
	\end{lemma}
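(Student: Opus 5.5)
The plan is to turn the statement into an eigenvalue bound for a weighted Gram matrix and then use the diameter condition entrywise. Put $a_i=\ip{v}{p_i}$, so $\Phi(v)=\sum_i\lambda_ia_i^2$ and $\sum_i\lambda_ia_i=0$ by~\eqref{eq:balance}. Let $M=\sum_i\lambda_ip_ip_i^{T}$; the claim is that the largest eigenvalue of $M$ is at most $U$. Write $D=\operatorname{diag}(\lambda_i)$ and $G=(\ip{p_i}{p_j})$. The nonzero eigenvalues of $M$ coincide with those of $D^{1/2}GD^{1/2}$. The vector $w=D^{1/2}\mathbf 1$ lies in the kernel of this matrix, since $G\lambda=0$ by the balance condition. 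So it suffices to show that
\[
y^{T}Gy\le U\sum_i y_i^2/\lambda_i\qquad\text{whenever }\textstyle\sum_i y_i=0,
\]
with $y=D^{1/2}x$ for $x\perp w$.

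Next I would rewrite $G$ using the diameter condition. From $\norm{p_i}^2=r$ we get $G_{ij}=r-\tfrac12 d_{ij}^2$, where $d_{ij}=\norm{p_i-p_j}\le1$. For $\sum_iy_i=0$ the constant part drops out, giving
\[
y^{T}Gy=-\tfrac12\sum_{i\ne j}y_iy_jd_{ij}^2 .
\]
Equivalently, adding $U\mathbf 1\mathbf 1^{T}$ (harmless on $w^\perp$) replaces $G$ by $H=G+UJ$. This matrix has diagonal entries $r+U=\tfrac12$ and off-diagonal entries $\tfrac12(1-d_{ij}^2)\in[0,\tfrac12]$. The regular simplex with unit edges, where $\lambda_i=\tfrac15$, $H=\tfrac12 I$ and $D^{1/2}HD^{1/2}=\tfrac1{10}I=UI$, shows that the bound is sharp. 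This suggests the argument should compare $H$ with $\tfrac12 I$.

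The main obstacle is exactly this comparison. The off-diagonal entries of $H$ are nonnegative and can increase the quadratic form on vectors $y$ with same-sign pairs. The bound must therefore use that the $d_{ij}$ are not arbitrary: they come from points on a sphere of squared radius $r$ whose weighted centroid is the centre. My first attempt would be to take an eigenvector $v$ of $M$ with eigenvalue $\Phi=\Phi(v)$, $\norm v=1$. Then $v=\Phi^{-1}\sum_i\lambda_ia_ip_i$, and so
\[
\Phi^2=\sum_{i,j}\lambda_i\lambda_ja_ia_jH_{ij}.
\]
I would split this into the diagonal part $\tfrac12\sum_i\lambda_i^2a_i^2$ and the cross terms. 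On the cross terms I would use $H_{ij}=\tfrac12(1-d_{ij}^2)$ together with the identities $r=\sum_{i<j}\lambda_i\lambda_jd_{ij}^2$ from~\eqref{eq:jung} and $\Phi=\sum_{i<j}\lambda_i\lambda_j(a_i-a_j)^2$. The aim is to reach $\Phi^2\le U\Phi$.

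If this direct estimate does not close, the fallback is to decompose $E^4$ into $\R v$ and $v^\perp$ and apply the Jung-type identity of Lemma~\ref{lem:contacts} separately to the projections $a_i$ and to the projected points in $v^\perp$. The diameter constraint couples the two through
\[
\norm{P(p_i-p_j)}^2\le 1-(a_i-a_j)^2,
\]
where $P$ is the orthogonal projection onto $v^\perp$. I expect the needed extra input to be a lower bound for the orthogonal spread that uses $\norm{Pp_i}^2=r-a_i^2$.
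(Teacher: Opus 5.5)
There is a genuine gap. Your set-up is correct, and the eigenvector reduction is sound: if $Mv=\Phi v$ with $\norm v=1$, then $w:=\sum_i\lambda_ia_ip_i=\Phi v$, so $\Phi^2=\norm w^2=\sum_{i,j}\lambda_i\lambda_jH_{ij}a_ia_j$. This even replaces the Cauchy--Schwarz step of the paper. But the proposal stops at the one inequality that carries the lemma, $\norm w^2\le U\Phi$, and neither route you sketch supplies it.

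In your $y$-coordinates the bound you need is $y^THy\le U\sum_iy_i^2/\lambda_i$. That is a comparison of $H$ with $UD^{-1}$, not with $\tfrac12 I$; the latter is the right target only in the extremal case $\lambda_i=\tfrac15$, $r=\tfrac25$. The global identity $r=\sum_{i<j}\lambda_i\lambda_jd_{ij}^2$ is too coarse, because the weights $\lambda_ia_i^2$ differ from row to row. The fallback, as described, yields nothing: Jung's identity for the projected points $Pp_i$ gives $r-\Phi\le\tfrac12(1-\sum_i\lambda_i^2)-\Phi$, and $\Phi$ cancels. Both attempts use the diameter constraints $1-d_{ij}^2\ge0$ only through sums weighted by $\lambda_i\lambda_j$, which discards all dependence on $v$.

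The missing idea is to apply the balance condition row by row. For every $i$,
\[
\sum_j\lambda_jH_{ij}=\ip{p_i}{\sum_j\lambda_jp_j}+U\sum_j\lambda_j=U .
\]
Since $H_{ij}\ge0$ and $2a_ia_j\le a_i^2+a_j^2$, a weighted row-sum (Schur) test gives
\[
\Phi^2=\sum_{i,j}\lambda_i\lambda_jH_{ij}a_ia_j\le\sum_i\lambda_ia_i^2\sum_j\lambda_jH_{ij}=U\Phi ,
\]
which closes your argument. If you keep the discarded term, this becomes the identity
\[
U\Phi-\norm w^2=\tfrac14\sum_{i,j}\lambda_i\lambda_j(1-d_{ij}^2)(a_i-a_j)^2\ge0 .
\]
This is exactly what the paper uses: each diameter inequality is weighted by $\lambda_i\lambda_j(a_i-a_j)^2$ rather than by $\lambda_i\lambda_j$. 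The paper then finishes with $\Phi(v)^2\le\norm v^2\norm w^2\le U\norm v^2\Phi(v)$.
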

	\begin{proof}
		Fix $v\in\E^4$, and put $a_i=\ip{v}{p_i}$ and
		$w=\sum_i\lambda_i a_i p_i$. Then $\Phi(v)=\ip{v}{w}$, and the
		identity $\sum_i\lambda_i p_i=0$ in~\eqref{eq:balance} gives
		\[
		\sum_i\lambda_i a_i
		=\ip{v}{\sum_i\lambda_i p_i}=0.
		\]
		Since
		$1-\norm{p_i-p_j}^2=1-2r+2\ip{p_i}{p_j}$, expansion gives
		\begin{equation*}
			0\le\sum_{i,j}\lambda_i\lambda_j
			(1-\norm{p_i-p_j}^2)(a_i-a_j)^2
			=4U\Phi(v)-4\norm{w}^2.
		\end{equation*}
		So by Cauchy--Schwarz,
		\[
		\Phi(v)^2\le\norm{v}^2\norm{w}^2
		\le U\norm{v}^2\Phi(v).
		\]
	\end{proof}
	
	For a unit vector $e$, the quantity $\Phi(e)$ is the weighted sum of
	squared distances from the contact points to $e^\perp$. For any
	orthonormal basis $e_1,\ldots,e_4$ of $\E^4$, Pythagoras gives
	\[
	\sum_{j=1}^4\Phi(e_j)
	=\sum_{i=1}^5\lambda_i\sum_{j=1}^4\ip{e_j}{p_i}^2
	=\sum_{i=1}^5\lambda_i\norm{p_i}^2=r.
	\]
	Thus the upper bound on $\Phi$ in two basis directions forces a
	lower bound on the sum of $\Phi$ in the other two directions.
	Choosing a minimizing direction cancels the mixed projections
	and gives a lower bound outside the two cones.
	
	\begin{lemma}\label{lem:cone-projections}
		Under the assumptions of Lemma~\ref{lem:projections}, let $e$ be a
		unit vector minimizing $\Phi$ on the unit sphere, and put
		$\eta=(3r-1)/2$. Then
		\[
			|\ip{x}{e}|\le\frac{\norm{x}}{\sqrt2}
			\quad\Longrightarrow\quad
			\Phi(x)\ge\eta\norm{x}^2.
		\]
	\end{lemma}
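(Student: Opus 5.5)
The plan is to treat $\Phi$ as a quadratic form and use its spectrum. Let $M=\sum_i\lambda_i p_ip_i^{T}$, so that $\Phi(v)=\ip{v}{Mv}$. Then $M$ is symmetric positive semidefinite, and its eigenvalues $\mu_1\le\mu_2\le\mu_3\le\mu_4$ satisfy two constraints:
\[
\mu_1+\mu_2+\mu_3+\mu_4=\operatorname{tr}M=\sum_i\lambda_i\norm{p_i}^2=r,
\qquad
\mu_4\le U.
\]
The trace identity is the orthonormal-basis computation displayed just before the statement. The bound $\mu_4\le U$ is exactly Lemma~\ref{lem:projections}.

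A unit vector $e$ minimizing $\Phi$ on the sphere is an eigenvector of $M$ for $\mu_1$, by the Lagrange condition or the Rayleigh quotient characterization. This is what kills the mixed terms. Write $x=se+y$ with $y\perp e$ and $s=\ip{x}{e}$. Since $e^\perp$ is $M$-invariant,
\[
\Phi(x)=\mu_1s^2+\ip{y}{My}.
\]
The restriction of $M$ to $e^\perp$ has eigenvalues $\mu_2,\mu_3,\mu_4$, so $\ip{y}{My}\ge\mu_2\norm{y}^2$. Therefore
\[
\Phi(x)\ge\mu_1s^2+\mu_2\norm{y}^2 .
\]

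Next use the cone condition. The hypothesis $|s|\le\norm{x}/\sqrt2$ means $s^2\le\norm{x}^2/2\le\norm{y}^2$, where $s^2+\norm{y}^2=\norm{x}^2$. Because $\mu_1\le\mu_2$, the lower bound $\mu_1s^2+\mu_2\norm{y}^2$ is smallest when $s^2$ is as large as allowed, namely $s^2=\norm{y}^2=\norm{x}^2/2$. This gives
\[
\Phi(x)\ge\tfrac12(\mu_1+\mu_2)\norm{x}^2 .
\]

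Finally combine the trace identity with the eigenvalue bound:
\[
\mu_1+\mu_2=r-\mu_3-\mu_4\ge r-2U=3r-1 .
\]
Hence $\Phi(x)\ge\tfrac{3r-1}{2}\norm{x}^2=\eta\norm{x}^2$.

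The only point requiring care is the claim that the minimizer $e$ is an eigenvector, so that no cross term $2s\ip{e}{My}$ appears. This is standard: at a minimizer of $\ip{v}{Mv}$ on the sphere, the gradient $2Me$ must be parallel to $e$. With that in hand, the rest is the elementary eigenvalue bookkeeping above.
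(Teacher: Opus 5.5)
Your proof is correct and is essentially the paper's argument in spectral language: you use the first-order condition at the minimizer $e$ to kill the cross term, the trace identity with Lemma~\ref{lem:projections} to get $\mu_1+\mu_2\ge r-2U=2\eta$, and the cone condition to finish. The only difference is that the paper avoids eigenvalues by completing $e$ and the unit direction $u$ of the $e^\perp$-component of $x$ to an orthonormal basis $e,u,v,w$, so that $\Phi(v),\Phi(w)\le U$ give $\Phi(e)+\Phi(u)\ge 2\eta$ and minimality gives $\Phi(u)\ge\Phi(e)$ directly.
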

	\begin{proof}
		A minimizer exists by continuity and compactness of the unit sphere.
		For any unit $u\perp e$, differentiating
		$\Phi(e\cos t+u\sin t)$ at $t=0$ gives
		\begin{equation}\label{eq:mixed-projections}
			\sum_i\lambda_i\ip{e}{p_i}\ip{u}{p_i}=0.
		\end{equation}
		Complete $e,u$ to an orthonormal basis $e,u,v,w$ of $\E^4$.
		By Pythagoras,
		\[
		\Phi(e)+\Phi(u)+\Phi(v)+\Phi(w)=r.
		\]
		Lemma~\ref{lem:projections} therefore gives
		\[
		\Phi(e)+\Phi(u)\ge r-2U=2\eta.
		\]
		Since $\Phi(u)\ge\Phi(e)$ by minimality, also $\Phi(u)\ge\eta$.
		
		The assertion is immediate for $x=0$. Otherwise put $z=\ip{x}{e}$ and write
		$x=tu+ze$, where $u\perp e$ is a unit vector and
		$t=\norm{x-ze}\ge0$. The hypothesis gives $t^2\ge z^2$.
		Using~\eqref{eq:mixed-projections}, we obtain
		\[
		\begin{split}
			\Phi(x)&=(t^2-z^2)\Phi(u)
			+z^2\bigl(\Phi(u)+\Phi(e)\bigr)\\
			&\ge\eta(t^2-z^2)+2\eta z^2
			=\eta\norm{x}^2.
		\end{split}
		\]
	\end{proof}
	
	For a unit vector $e$ and $R>0$, define the truncated double cone
	\[
	\mathcal C(e,R)=
	\{x\in\E^4:\norm{x}\le R,\ |\ip{x}{e}|\ge\norm{x}/\sqrt2\}.
	\]
	The two cones have axis directions $e$ and $-e$ and half-angle
	$\pi/4$. The next observation explains the two additional centres.
	
	\begin{lemma}\label{lem:cone-cover}
		The set $\mathcal C(e,R)$ is covered by the two closed balls of
		radius $R/\sqrt2$ centred at $\pm(R/\sqrt2)e$.
	\end{lemma}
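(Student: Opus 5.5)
By the symmetry $x\mapsto -x$, which preserves $\mathcal C(e,R)$ and swaps the two centres, it is enough to treat points with $\ip{x}{e}\ge0$. For these points I will show that $\norm{x-(R/\sqrt2)e}\le R/\sqrt2$. The whole argument is one direct computation.

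Put $c=(R/\sqrt2)e$, $s=\norm{x}$ and $z=\ip{x}{e}$. Then
\[
\norm{x-c}^2=s^2-\sqrt2\,R\,z+\tfrac{R^2}{2},
\]
so the desired inequality is equivalent to $s^2\le\sqrt2\,Rz$. The cone condition together with $z\ge0$ gives $z\ge s/\sqrt2$, hence $\sqrt2\,Rz\ge Rs$. Since $s\le R$, we get $Rs\ge s^2$, and the inequality follows. Geometrically, the ball of radius $R/\sqrt2$ centred at $c$ passes through the origin, and its boundary sphere meets the cone of half-angle $\pi/4$ exactly along the sphere of radius $R$. Hence the truncated cone lies inside this ball.

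The case $x=0$ is trivially covered, since the origin lies on both closed balls. There is no real obstacle in this argument. The only point needing care is to use the sign of $z$ to pick the correct centre, so that the absolute value in the cone condition can be dropped.
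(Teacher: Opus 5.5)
Your proposal is correct and follows essentially the same route as the paper: both expand $\norm{x-(R/\sqrt2)e}^2$ and use $|\ip{x}{e}|\ge\norm{x}/\sqrt2$ together with $\norm{x}\le R$ to get $\norm{x}^2-R\norm{x}\le0$. The only difference is presentational: you use the symmetry $x\mapsto -x$ to fix the sign of $\ip{x}{e}$, whereas the paper takes the minimum over $\pm$ and writes $|\ip{x}{e}|$ directly.
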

	\begin{proof}
		For $x\in\mathcal C(e,R)$, put $t=\norm{x}\in[0,R]$. Then
		\[
		\min_\pm\norm{x\mp(R/\sqrt2)e}^2
		=t^2+\frac{R^2}{2}-\sqrt2R|\ip{x}{e}|
		\le t^2-Rt+\frac{R^2}{2}\le\frac{R^2}{2}.
		\]	
	\end{proof}
	
	\begin{figure}[tbp]
		\centering
		\begin{tikzpicture}[x=2.15cm,y=2.15cm,>=stealth]
			\fill[blue!12] (0,0)--(45:1) arc[start angle=45,end angle=135,radius=1]--cycle;
			\fill[blue!12] (0,0)--(225:1) arc[start angle=225,end angle=315,radius=1]--cycle;
			\draw[blue!65!black] (0,.707107) circle[radius=.707107];
			\draw[blue!65!black] (0,-.707107) circle[radius=.707107];
			\draw[dashed,gray] (0,0) circle[radius=1];
			\draw[gray] (-.707107,-.707107)--(.707107,.707107);
			\draw[gray] (-.707107,.707107)--(.707107,-.707107);
			\draw[->] (-1.22,0)--(1.32,0) node[right] {$e^\perp$};
			\draw[->] (0,-1.48)--(0,1.58) node[above] {$e$};
			\fill (0,0) circle[radius=.018] node[below right] {$0$};
			\fill[blue!65!black] (0,.707107) circle[radius=.02]
			node[right,xshift=3pt] {$\frac{R}{\sqrt2}e$};
			\fill[blue!65!black] (0,-.707107) circle[radius=.02]
			node[right,xshift=3pt] {$-\frac{R}{\sqrt2}e$};
			\draw (.0,.27) arc[start angle=90,end angle=45,radius=.27];
			\node at (.15,.37) {\small $\pi/4$};
			\node[anchor=east] at (-.92,.48) {$\norm{x}=R$};
		\end{tikzpicture}
		\caption{A two-dimensional section containing the axis $e$.
			The shaded parts of $\overline B(0,R)$ form the section of the truncated double cone.
			They lie in the two balls of Lemma~\ref{lem:cone-cover}.
			In the large circumradius argument, the other directions are covered
			using the five rescaled contact points.}
		\label{fig:cones}
	\end{figure}
	
	We can now combine the projection estimate with the restrictions
	coming from the five contact points. These restrictions give a lower bound
	on every scalar product $\ip{x}{p_i}$. A large weighted sum of their
	squares then forces at least one of them to be positive and sufficiently
	large, bringing $x$ close to the corresponding rescaled contact point.
	
	\begin{proposition}\label{prop:large}
		Let $K\subset\E^4$ be compact of diameter one, with circumball
		$\overline B(0,\sqrt r)$. If $r_0\le r\le\tfrac25$, then $K$ is
		covered by seven open balls of radius $\rho$.
	\end{proposition}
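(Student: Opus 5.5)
The plan is to use the five rescaled contact points $\tfrac9{20}p_i$ together with the two cone centres of Lemma~\ref{lem:cone-cover}. First I would check that there really are five contact points. By~\eqref{eq:jung}, a minimal representation with $k\le4$ points gives $r\le\tfrac{3}{8}=0.375<r_0$. So Lemma~\ref{lem:contacts} yields $p_1,\ldots,p_5\in K$ and $\lambda_i>0$ satisfying~\eqref{eq:balance}, with $\norm{p_i-p_j}\le1$. Let $e$ minimize $\Phi$ on the unit sphere as in Lemma~\ref{lem:cone-projections}, put $R=\sqrt r$ and $c=\tfrac9{20}$, and take the seven centres $cp_1,\ldots,cp_5,\pm(R/\sqrt2)e$.

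For the cone part, Lemma~\ref{lem:cone-cover} covers $K\cap\mathcal C(e,R)$ by closed balls of radius $\sqrt{r/2}\le\sqrt{1/5}<0.4473<\rho$. Hence these two balls already lie inside open balls of radius $\rho$.

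For the complement, take $x\in K$ with $|\ip{x}{e}|\le\norm{x}/\sqrt2$, and put $s=\norm{x}^2\le r$ and $a_i=\ip{x}{p_i}$. The facts I will use are the following.
\begin{itemize}
\item Lemma~\ref{lem:cone-projections} gives $\sum_i\lambda_i a_i^2=\Phi(x)\ge\eta s$.
\item Balance gives $\sum_i\lambda_i a_i=0$.
\item The diameter condition $\norm{x-p_i}^2\le1$ gives $a_i\ge-m$, where $m=(1-r-s)/2>0$.
\end{itemize}
Let $M=\max_i a_i$. Summing $(a_i+m)(M-a_i)\ge0$ with weights $\lambda_i$ yields $\sum\lambda_i a_i^2\le Mm$, hence $M\ge\eta s/m$. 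For the index attaining $M$ we have $\norm{x-cp_i}^2=s-2cM+c^2r$. So it suffices to verify
\[
2c\,\frac{\eta s}{m}>s+c^2r-\sigma,
\quad\text{i.e.}\quad
4c\eta s>(1-r-s)(s+c^2r-\sigma),
\]
for all $0\le s\le r$ and $r_0\le r\le\tfrac25$, with $\eta=(3r-1)/2>0$. When the right-hand factor $s+c^2r-\sigma$ is negative (in particular near $s=0$, since $c^2r\le0.081<\sigma$), the inequality is trivial.

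The main obstacle is this final two-variable polynomial inequality, which must hold on the whole rectangle. Its tightness is exactly what forces the threshold $r_0$ and the choice $c=\tfrac9{20}$. I would handle it as follows.
\begin{itemize}
\item Note that for fixed $s$, the left side increases in $r$ and the right side decreases in $r$. Since $s\le r$, I would treat $r$ and $s$ carefully, for example by replacing $r$ by $r_0$ in $\eta$ and in $(1-r-s)$, and bounding $c^2r\le c^2\cdot\tfrac25$.
\item This reduces the problem to a one-variable quadratic in $s$ on $[0,\tfrac25]$, with the difference being concave or convex in $s$.
\item Verify positivity of that quadratic at the endpoints and at its vertex, by exact rational arithmetic.
\end{itemize}
If the crude decoupling fails near $s=r_0$, I would instead split the $s$-interval at $s=r_0$, using $s\le r$ on the upper piece.
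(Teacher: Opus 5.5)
Up to the reduced inequality, your argument is the paper's. You get five contact points because $r_0>\tfrac38$, you use the same seven centres, and you take the same weighted sum of $(a_i+m)(M-a_i)\ge0$. Your target $4c\eta s>(1-r-s)(s+c^2r-\sigma)$ is exactly the paper's $(L-\sigma)d=Q(q,r)<0$.

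The gap is in the step you flag as the main obstacle: the decoupling you propose does not work. The inequality is extremely tight at $r=r_0$. For $r=r_0$, the difference $4c\eta s-(1-r_0-s)(s+c^2r_0-\sigma)$ is a quadratic in $s$ whose minimum is only about $1.5\cdot10^{-6}$, attained near $s\approx0.326$; this is $-\Delta(r_0)$ in the paper's notation. Replacing $c^2r$ by $c^2\cdot\tfrac25$, while keeping $r=r_0$ in $\eta$ and in $1-r-s$, raises the right-hand side there by about $(1-r_0-s)\,c^2(\tfrac25-r_0)\approx9\cdot10^{-4}$. That is far larger than the margin. Concretely, your decoupled inequality becomes $s^2-0.6481s+0.1041>0$, which fails for $s$ roughly in $(0.293,0.355)$. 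For example, at $s=0.32$ the left side is about $0.04378$ and the right side about $0.04471$. This failure region lies below $r_0$, so splitting the $s$-interval at $s=r_0$ does not help. Your heuristic that the right side decreases in $r$ is also false in general. Its $r$-derivative is $c^2(1-r-s)-(s+c^2r-\sigma)$, which is positive, for instance, at $s=0.2$, $r=r_0$. You correctly observe that tightness is what forces $r_0$, and that is exactly why a term-by-term worst-case bound cannot succeed.

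The $r$-dependence has to stay coupled. The paper maximizes $Q(q,r)=-q^2+B(r)q+C(r)$ over all real $q$. This gives $\Delta(r)=B(r)^2/4+C(r)$, a quadratic in $r$, and the paper then checks $\Delta(r_0)<0$, $\Delta'(\tfrac25)<0$ and $\Delta''>0$. Because $q$ ranges over all reals, the constraint $s\le r$ is never needed. A repair closer to your plan is to use monotonicity of the whole difference $D(s,r)=4c\eta s-(1-r-s)(s+c^2r-\sigma)$ rather than of each side:
\[
\partial_rD=(1+6c+c^2)s+2c^2r-c^2-\sigma .
\]
This is positive for all $r\ge r_0$ once $s\ge\sigma-\tfrac25c^2$, and that bound on $s$ holds whenever the factor $s+c^2r-\sigma$ is nonnegative, which is the only nontrivial case. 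Hence $D(s,r)\ge D(s,r_0)$ there. Then $D(s,r_0)>0$ for all real $s$, which is precisely the barely negative discriminant $\Delta(r_0)<0$.
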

	\begin{proof}
		Since $r_0>\tfrac{3}{8}$, Lemma~\ref{lem:contacts} supplies exactly five
		contact points in the minimal representation. Choose $e$ as in
		Lemma~\ref{lem:cone-projections} and put $\alpha=\tfrac{9}{20}$.
		We use the seven centres
		\[
			\alpha p_1,\ldots,\alpha p_5,
			\qquad \sqrt{r/2}\,e,\qquad-\sqrt{r/2}\,e.
		\]
		We only need to cover
		$H_5:=\overline B(0,\sqrt r)\cap\bigcap_{i=1}^5\overline B(p_i,1)$.
		By Lemma~\ref{lem:cone-cover}, the last two centres cover
		$H_5\cap\mathcal C(e,\sqrt r)$ with squared radius at most
		$r/2\le\tfrac{1}{5}<\sigma$.
		
		Fix $x\in H_5\setminus\mathcal C(e,\sqrt r)$, so that
		$|\ip{x}{e}|<\norm{x}/\sqrt2$, and put
		\[
		q=\norm{x}^2\le r,\qquad d=1-r-q\ge\tfrac{1}{5},
		\qquad T=\max_i\ip{x}{p_i}.
		\]
		The contact restrictions imply $\ip{x}{p_i}\ge-d/2$.
		Taking the weighted sum of the nonnegative quantities
		$(\ip{x}{p_i}+d/2)(T-\ip{x}{p_i})$ and using
		$\sum_i\lambda_i\ip{x}{p_i}=0$, we obtain
		\[
			\Phi(x)\le\frac{dT}{2}.
		\]
		Lemma~\ref{lem:cone-projections} applies to $x$, and hence
		\[
			\min_i\norm{x-\alpha p_i}^2
			=q+\alpha^2r-2\alpha T
			\le L:=q+\alpha^2r-\frac{4\alpha\eta q}{d},
			\qquad\eta=\frac{3r-1}{2}.
		\]
		We now show that $L<\sigma$ throughout the prescribed
		range. Multiplying $L-\sigma$ by $d$ gives
		\[
			(L-\sigma)d=Q(q,r):=-q^2+B(r)q+C(r),
		\]
		where
		\begin{align*}
			B(r)&=1+\sigma+2\alpha-(1+\alpha^2+6\alpha)r,\\
			C(r)&=(\alpha^2r-\sigma)(1-r).
		\end{align*}
		The maximum of $Q(q,r)$ over all real $q$ is
		\[
		\Delta(r)=\frac{B(r)^2}{4}+C(r).
		\]
		Substituting the rational constants gives
		\[
		\Delta(r_0)<0,\qquad
		\Delta'(\tfrac{2}{5})<0,\qquad \Delta''(r)>0.
		\]
		Thus $\Delta'$ is increasing and negative throughout $[r_0,\tfrac{2}{5}]$,
		so $\Delta$ is decreasing and negative there. Since $d>0$, we obtain
		$L<\sigma$. The first five balls therefore cover $H_5\setminus\mathcal C(e,\sqrt r)$.
	\end{proof}
	
	\section{A four-parameter family of outer sets}\label{sec:parameters}
	
	By Proposition~\ref{prop:triangle}, the remaining task is to cover the sets
	in~\eqref{eq:outer-H} for
	\begin{equation}\label{eq:remaining}
		\tfrac{299}{1000}\le r\le r_0,
		\qquad \tfrac{5}{6}r\le\tau\le\min\{r,\tfrac{1}{3}\}.
	\end{equation}
	We describe the whole range by one closed cube $[0,1]^4$.
	The only change at $r=\tfrac{1}{3}$ concerns the lower bound on the
	height of the triangle's plane: this bound is zero for $r\le\tfrac13$
	and $\sqrt{r-\tfrac13}$ for $r\ge\tfrac13$. A signed square in the
	first parameter incorporates this change without requiring a second family.
	
	Put
	\[
	\delta_-:=\tfrac{1}{3}-\tfrac{299}{1000}=\tfrac{103}{3000},
	\qquad \delta_+:=r_0-\tfrac{1}{3}=\tfrac{15203}{300000}.
	\]
	For $u=(u_1,u_2,u_3,u_4)\in[0,1]^4$, define $t_+=\max\{t,0\}$ and
	\begin{align}
		t&=u_1\sqrt{\delta_+}-(1-u_1)\sqrt{\delta_-},
		&r&=\tfrac{1}{3}+t|t|,\nonumber\\
		h&=(1-u_2)\sqrt{r/6}+u_2t_+,
		&\tau&=r-h^2,\label{eq:single-rh}\\
		a_0&=\sqrt{\max\{0,1-1/(4\tau)\}},
		&a&=\frac{1-u_3}{2}+u_3a_0,\nonumber\\
		b&=(1-u_4)\sqrt{(1-a)/2}+u_4a.&&\label{eq:single-ab}
	\end{align}
	The three contact points are
	\begin{align}
		P_1&=(\sqrt\tau,0,h,0),\nonumber\\
		P_2&=(\sqrt\tau(2a^2-1),\,2\sqrt\tau\,a\sqrt{1-a^2},\,h,0),
		\label{eq:single-contacts}\\
		P_3&=(\sqrt\tau(2b^2-1),\,-2\sqrt\tau\,b\sqrt{1-b^2},\,h,0).
		\nonumber
	\end{align}
	All formulas are continuous on the closed cube. The signed square
	is useful for interval estimates: $\sqrt{\max\{0,r-\tfrac{1}{3}\}}=t_+$
	is a Lipschitz function of $u_1$, including where $r$ passes through
	$\tfrac{1}{3}$.
	
	\begin{lemma}\label{lem:single-onto}
		Every outer set in~\eqref{eq:outer-H} satisfying~\eqref{eq:remaining}
		is congruent to one represented by
		\eqref{eq:single-rh}--\eqref{eq:single-contacts}.
	\end{lemma}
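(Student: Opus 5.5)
The plan is to reverse the parametrization one coordinate at a time. Start from an outer set $H(r,P_1,P_2,P_3)$ in the coordinates of Proposition~\ref{prop:triangle}, with $r,\tau$ satisfying~\eqref{eq:remaining}. We will find $u\in[0,1]^4$ whose formulas \eqref{eq:single-rh}--\eqref{eq:single-contacts} reproduce $r$, $h=\sqrt{r-\tau}$, and the triangle, up to an orthogonal map of $\E^4$ fixing the origin. Such a map carries $\overline B(0,\sqrt r)$ to itself and each $\overline B(P_i,1)$ to the ball around the image point, so it carries the outer set onto the parametrized one.

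\textbf{Step 1: the parameter $u_1$.} Put $t=\operatorname{sign}(r-\tfrac13)\sqrt{|r-\tfrac13|}$. Then $t|t|=r-\tfrac13$ and $t\in[-\sqrt{\delta_-},\sqrt{\delta_+}]$. Since $t$ is an affine increasing function of $u_1$ with exactly these endpoint values, a unique $u_1\in[0,1]$ gives this $t$, and hence the given $r$. Note that $t_+=\sqrt{\max\{0,r-\tfrac13\}}$.

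\textbf{Step 2: the parameter $u_2$.} The target is $h=\sqrt{r-\tau}$. The bound $\tau\ge\tfrac56 r$ gives $h\le\sqrt{r/6}$. The bound $\tau\le\tfrac13$ gives $h^2\ge r-\tfrac13$, and together with $h\ge0$ this yields $h\ge t_+$. The interval $[t_+,\sqrt{r/6}]$ is nonempty because $r-\tfrac13\le r/6$ is equivalent to $r\le\tfrac25$. Hence the affine interpolation in $u_2$ attains $h$ for some $u_2\in[0,1]$, and then $\tau=r-h^2$ as required.

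\textbf{Step 3: the parameters $u_3,u_4$.} This is the main step, since it requires choosing the labelling and orientation correctly. Rotate and, if necessary, reflect the triangle's plane $\E^2\oplus\{h\}\oplus\{0\}$ about its circumcentre. This is an orthogonal map of $\E^4$ fixing the origin and the $(h,0)$ coordinates. Write the vertices at angles $0$, $2\theta$ and $-2\varphi$ on the circle of radius $\sqrt\tau$, and let the third central angle be $2\psi$, so that $\theta+\varphi+\psi=\pi$.
\begin{itemize}
\item Label the vertices so that the arc from $P_1$ counterclockwise to $P_2$ is the largest central angle, followed by $P_3$. Then $\theta\ge\varphi\ge\psi$.
\item Set $a=\cos\theta$ and $b=\cos\varphi$. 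This matches~\eqref{eq:single-contacts}, since $\sqrt\tau(\cos2\theta,\sin2\theta)=\sqrt\tau(2a^2-1,2a\sqrt{1-a^2})$.
\end{itemize}
Now check the constraints on $a$. The triangle is acute, so $\theta<\pi/2$. It is the largest of three angles summing to $\pi$, so $\theta\ge\pi/3$. Hence $0<a\le\tfrac12$. The longest side is $\norm{P_1-P_2}=2\sqrt\tau\sin\theta$, and it is at most $1$ by the diameter condition. This gives $\sin^2\theta\le1/(4\tau)$, that is, $a\ge a_0$; when $\tau<\tfrac14$ this reads $a\ge0=a_0$.

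The interval $[a_0,\tfrac12]$ is nonempty because $\tau\le\tfrac13$ implies $1-1/(4\tau)\le\tfrac14$. So the affine parametrization yields $u_3\in[0,1]$; if $a_0=\tfrac12$, then $a=\tfrac12$ and any $u_3$ works.

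For $b$, the condition $\varphi\le\theta$ gives $b\ge a$. The condition $\varphi\ge\psi=\pi-\theta-\varphi$ gives $\varphi\ge\pi/2-\theta/2$, so $b\le\sin(\theta/2)=\sqrt{(1-a)/2}$. Moreover $a\le\tfrac12\le\sin(\theta/2)$ because $\theta\ge\pi/3$, so this interval is nonempty. Hence $u_4\in[0,1]$ exists, with $u_4$ arbitrary if the endpoints coincide.

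\textbf{Conclusion.} With this $u$, formulas~\eqref{eq:single-contacts} give exactly the transformed $P_1,P_2,P_3$, so the outer set is congruent to the parametrized one. The delicate part is Step 3: one must order the central angles so that the largest lies between $P_1$ and $P_2$, since only then do both bounds $a\le\tfrac12$ and $a\ge a_0$ hold. One must also check that the remaining triangle sides impose no further constraint, which holds because $P_1P_2$ is the longest side.
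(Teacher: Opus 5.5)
Your proposal is correct and is essentially the paper's argument. You recover $r$ from the signed square, place $h$ in $[t_+,\sqrt{r/6}]$ using $\tfrac56 r\le\tau\le\min\{r,\tfrac13\}$, and obtain $a=\cos A\in[a_0,\tfrac12]$ and $a\le b\le\sqrt{(1-a)/2}$ from the ordering $A\ge B\ge C$ and the longest-side bound; your central-angle placement of the vertices replaces the paper's sine-rule comparison of side lengths. The one detail to state explicitly is that $\varphi\ge\psi$ is arranged by taking $P_1$ to be the endpoint of the longest arc that it shares with the second-longest arc, which your freedom to relabel and reflect allows.
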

	\begin{proof}
		The quantity $t$ increases from $-\sqrt{\delta_-}$ to
		$\sqrt{\delta_+}$, and $t\mapsto\tfrac{1}{3}+t|t|$ is strictly increasing.
		Thus $r$ runs through the whole required interval. For each fixed $r$,
		the height $h$ runs from $\sqrt{r/6}$ to
		$t_+=\sqrt{\max\{0,r-\tfrac{1}{3}\}}$. These endpoints are in the stated
		order because $r\le r_0<\tfrac{2}{5}$. Hence $\tau=r-h^2$ runs through
		$[5r/6,\min\{r,\tfrac{1}{3}\}]$.
		
		Order the angles of the acute triangle as $A\ge B\ge C$.
		Then $A\ge\pi/3$. By the sine rule, the longest-side condition is
		$2\sqrt\tau\sin A\le1$. For $\tau>\tfrac{1}{4}$ this gives
		$\cos A\ge\sqrt{1-1/(4\tau)}$; for $\tau\le\tfrac{1}{4}$ the
		longest-side condition imposes no additional restriction on an acute angle.
		Thus every required value $a=\cos A$ belongs to $[a_0,\tfrac{1}{2}]$.
		For fixed $A$, the inequalities $C\le B\le A$
		are equivalent to $(\pi-A)/2\le B\le A$, or
		\[
		a\le b=\cos B\le\sqrt{(1-a)/2}.
		\]
		These are precisely the intervals in~\eqref{eq:single-ab}.
		Substituting $a=\cos A$ and $b=\cos B$ into~\eqref{eq:single-contacts}
		and using $A+B+C=\pi$ gives
		\[
		\norm{P_1-P_2}=2\sqrt\tau\sin A,\quad
		\norm{P_1-P_3}=2\sqrt\tau\sin B,\quad
		\norm{P_2-P_3}=2\sqrt\tau\sin C.
		\]
		By the sine rule, these are the side lengths of the required triangle.
		The circumcentres of both triangles are the orthogonal projections
		of the origin onto their respective affine planes, at distance $h$
		from the origin. Hence an orthogonal change of coordinates and a
		relabelling identify the corresponding outer sets.
	\end{proof}
	
	The closed cube also represents right triangles and limiting
	configurations in which two contact points coincide.
	This causes no difficulty: coverage is proved for the entire closed
	family, which includes all the sets required by the reduction.
	
	\section{A sufficient covering condition}\label{sec:condition}
	
	We now describe a finite test that certifies one set of seven centres
	for a whole parameter box. The condition is sufficient: failure of the test does not rule out
	a covering, even for the same centres.
	
	\subsection{A common rational outer approximation}\label{sec:outer-approximation}
	
	Let $Q\subset[0,1]^4$ be a closed parameter box. Suppose that rational
	intervals $[r_-,r_+]$ and $[\ell_{ij},u_{ij}]$ enclose the squared
	circumradius and the coordinates of each contact point for all parameters
	in $Q$. Define
	\[
		A_i=\left(\frac{\ell_{ij}+u_{ij}}2\right)_{j=1}^4,
		\qquad e_{ij}=\frac{u_{ij}-\ell_{ij}}2,
		\qquad E_i=\tfrac{2}{3}\sum_{j=1}^4e_{ij}.
	\]
	For any $x$ in a true outer set represented by $Q$, we have
	$|x_j|\le\norm{x}\le\sqrt r\le\sqrt{\tfrac{2}{5}}<\tfrac{2}{3}$. Hence
	$|\ip{P_i-A_i}{x}|\le E_i$. Expanding $\norm{x-P_i}^2\le1$
	and using $\norm{P_i}^2=r$ gives
	\begin{equation}\label{eq:rational-body}
		\norm{x}^2\le r_+,\qquad
		\norm{x}^2-2\ip{A_i}{x}\le b_i:=1-r_-+2E_i,
		\quad 1\le i\le3.
	\end{equation}
	Thus the set $\Omega_Q$ defined by~\eqref{eq:rational-body} contains
	every member of the family represented by $Q$.
	
	This step is what changes a calculation at a sample parameter into a
	uniform statement on a box. The intervals account for all movement of
	the contact points, and the terms $2E_i$ allow for the resulting error in
	the distance inequalities. The supplement gives the exact interval rules. They use outward
	rounding on the grid $2^{-44}\mathbb Z$, with square roots bounded
	by integer square roots. Thus every enclosure is obtained using
	integer arithmetic.
	
	\subsection{Lifting to a polytope}
	
	We use the paraboloid lifting $x\mapsto(x,\norm{x}^2)$, a standard
	construction in computational geometry, particularly in the theory
	of Voronoi diagrams; see, for example, the textbook treatment in
	Matou\v{s}ek~\cite[Section~5.7]{Matousek}. This lifting is useful here
	because membership in a ball, or in the complement of an open ball,
	becomes a linear inequality in the lifted coordinates.
	Edelsbrunner and Seidel~\cite[Sections~2 and~5]{EdelsbrunnerSeidel}
	use this construction for intersection and union problems involving
	balls. Runge~\cite{Runge} gives a related exact test for covering an
	intersection of balls by a union of balls, using quadratic
	optimization over polyhedra in the original dimension.
	Here we combine the lifting with the rational outer approximation
	above and a sufficient covering test at the vertices of a polytope.
	
	Let $C_1,\ldots,C_7\in\E^4$ be rational centres, and let $s$ be
	a rational number with $0<s\le\sigma$. The value of $s$ may vary
	from one box to another. Introduce a new real variable $q$ and
	define a polyhedron (an intersection of finitely many closed
	half-spaces) $\mathcal P_Q\subset\E^5$ by
	\begin{align}
		0\le q&\le r_+,\label{eq:lift-radius}\\
		q-2\ip{A_i}{x}&\le b_i, &&1\le i\le3,\notag\\
		2\ip{C_j}{x}-q&\le\norm{C_j}^2-s, &&1\le j\le7.
		\label{eq:lift-miss}
	\end{align}
	There are twelve inequalities in all. If $x\in\Omega_Q$ is outside of every open ball $B(C_j,\sqrt s)$, then
	$(x,\norm{x}^2)\in\mathcal P_Q$.
	
	\begin{proposition}[Sufficient covering condition]\label{prop:condition}
		Suppose that five of $C_1,\ldots,C_7$ are affinely independent and
		zero is an interior point of their convex hull. If
		$\mathcal P_Q$ is empty, or if every vertex $(x,q)$ of $\mathcal P_Q$
		satisfies
		\begin{equation}\label{eq:negative}
			F(x,q):=\norm{x}^2-q<0,
		\end{equation}
		then the seven open balls $B(C_j,\sqrt s)$ cover $\Omega_Q$.
	\end{proposition}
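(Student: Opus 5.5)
We argue by contradiction. Suppose some $x\in\Omega_Q$ lies outside every open ball $B(C_j,\sqrt s)$. As noted before the statement, the lifted point $(x,\norm{x}^2)$ then lies in $\mathcal P_Q$, and it satisfies $F(x,\norm{x}^2)=0$. In particular $\mathcal P_Q$ is nonempty. To reach a contradiction with~\eqref{eq:negative}, we will show that $F<0$ holds on all of $\mathcal P_Q$, and not only at its vertices.

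The key structural step is that $\mathcal P_Q$ is a polytope, that is, bounded, and has at least one vertex. Boundedness in $q$ comes from~\eqref{eq:lift-radius}, which gives $0\le q\le r_+$. For $x$, we use the five affinely independent centres, say $C_1,\dots,C_5$, with $0$ in the interior of their convex hull. Then $0=\sum_j\mu_jC_j$ with all $\mu_j>0$ and $\sum_j\mu_j=1$. Moreover, the vectors $C_1,\dots,C_5$ span $\E^4$ and positively span it. The inequalities~\eqref{eq:lift-miss} give
\[
\ip{C_j}{x}\le\tfrac12\bigl(\norm{C_j}^2-s+q\bigr)\le M
\]
for a constant $M$ that is uniform once $q\le r_+$. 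Since the $C_j$ positively span $\E^4$, every unit vector $v$ has some $j$ with $\ip{C_j}{v}\ge c>0$, for a constant $c$ independent of $v$. Hence the set of $x$ with $\ip{C_j}{x}\le M$ for $j\le5$ is bounded. Writing this carefully is the only part needing care. An alternative is to argue via recession cones: a recession direction $(y,0)$ would satisfy $\ip{C_j}{y}\le0$ for $j\le5$ and $\sum_j\mu_j\ip{C_j}{y}=0$, forcing $\ip{C_j}{y}=0$ for all $j\le5$. Since the $C_j$ span $\E^4$, this gives $y=0$. This version is cleaner, and I would use it. A nonempty bounded polyhedron is the convex hull of its finitely many vertices, by the Minkowski--Weyl theorem.

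Finally, $F(x,q)=\norm{x}^2-q$ is a convex function on $\E^5$. Any point of $\mathcal P_Q$ is a convex combination $\sum_k\theta_kV_k$ of vertices $V_k$. By convexity, $F\bigl(\sum_k\theta_kV_k\bigr)\le\sum_k\theta_kF(V_k)<0$, using~\eqref{eq:negative} at each vertex. Applying this to $(x,\norm{x}^2)\in\mathcal P_Q$ gives $0<0$, a contradiction. The emptiness case is immediate from the first paragraph, since $\mathcal P_Q$ was shown to be nonempty. Hence the seven open balls cover $\Omega_Q$.

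The main obstacle is therefore only the boundedness argument, which is where the hypothesis on the five centres is used. Everything else is the standard fact that a convex function attains its maximum over a polytope at a vertex.
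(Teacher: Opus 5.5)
Your proof is correct and follows the paper's argument essentially step for step. You lift the uncovered point into $\mathcal P_Q$, prove boundedness with the recession-cone argument (positive barycentric weights force $\ip{C_j}{y}=0$, and the five centres span $\E^4$), then use that a nonempty polytope is the convex hull of its vertices and that $F$ is convex.
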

	\begin{proof}
		Suppose, for a contradiction, that $x_0\in\Omega_Q$ lies outside
		all seven open balls. Then $(x_0,\norm{x_0}^2)\in\mathcal P_Q$,
		so $\mathcal P_Q$ is nonempty.
		
		First we prove that $\mathcal P_Q$ is bounded. A recession direction $(y,t)$, meaning a direction in which one can
		move arbitrarily far from a feasible point while remaining feasible, has $t=0$ by~\eqref{eq:lift-radius}, and
		$\ip{C_j}{y}\le0$ by~\eqref{eq:lift-miss}. Take strictly positive
		barycentric weights for the five selected centres. Their weighted
		sum is zero, so all five inner products must vanish. These centres
		span $\E^4$, and hence $y=0$.
		
		A bounded polyhedron is a polytope, and every nonempty polytope
		is the convex hull of the vertices. Since $F$ is convex, the value of $F$
		at a convex combination of vertices does not exceed the corresponding
		convex combination of the values of $F$ at those vertices. Thus~\eqref{eq:negative} holds throughout $\mathcal P_Q$.
		This contradicts $F(x_0,\norm{x_0}^2)=0$.
	\end{proof}
	
	\begin{figure}[ht]
		\centering
		\begin{tikzpicture}[scale=0.92]
			\draw[->] (-2.2,0)--(2.3,0) node[right] {$x$};
			\draw[->] (0,-.15)--(0,3.1) node[above] {$q$};
			\filldraw[fill=gray!18,draw=black] (-1,2)--(-.5,1.6)--(.5,1.6)--(1,2)--(.8,2.5)--(-.8,2.5)--cycle;
			\draw[thick,domain=-1.68:1.68,samples=80] plot (\x,{\x*\x});
			\node at (.2,2.12) {$\mathcal P_Q$};
			\node[anchor=west] at (1.24,1.25) {$q=\norm{x}^2$};
			\foreach \x/\y in {-1/2,-.5/1.6,.5/1.6,1/2,.8/2.5,-.8/2.5}
			\fill (\x,\y) circle (1.5pt);
		\end{tikzpicture}
		\caption{A schematic illustration of the finite covering condition.
			The actual polytope lies in $\E^5$. If every vertex satisfies
			$q>\norm{x}^2$, convexity gives the same inequality throughout the
			polytope, excluding a lifted uncovered point.}
		\label{fig:lift}
	\end{figure}
	
	All parts of this condition are checked with rational arithmetic.
	The positive barycentric weights are obtained by solving a $5\times5$
	linear system for each candidate five-centre simplex. To find the
	vertices, the verifier enumerates every five-element subset of the
	twelve inequalities, solves each resulting nonsingular system,
	and tests all twelve inequalities at the solution.
	Every vertex occurs in this list, including when more than five
	constraints are active (hold with equality) or the polyhedron has smaller dimension.
	Indeed, the active normals at a vertex span $\E^5$; otherwise a
	small movement in both directions orthogonal to their span would
	remain feasible, contradicting extremality.
	
	An empty list of feasible vertices is used only after boundedness
	has been proved. Under this assumption, the absence of feasible
	vertices implies that the polyhedron is empty.
	No numerical tolerance is used for singularity, feasibility, or the
	strict sign in~\eqref{eq:negative}.
	
	\section{The finite certificate and the main theorem}\label{sec:finite}
	
	The preceding reduction leaves one finite statement to be verified.
	A \emph{dyadic box} in $[0,1]^4$ is a product
	\[
	Q=\prod_{j=1}^4
	\left[\frac{k_j}{2^{\ell_j}},\frac{k_j+1}{2^{\ell_j}}\right],
	\qquad 0\le k_j<2^{\ell_j},
	\]
	where $k_j$ and $\ell_j$ are integers and $\ell_j\ge0$.
	The depth of $Q$ is $\ell_1+\cdots+\ell_4$. A subdivision is obtained by
	starting with the whole cube and repeatedly bisecting one coordinate
	interval of a box, retaining both children.
	
	\begin{proposition}[Computer-assisted verification]\label{prop:computational}
		There is a subdivision of $[0,1]^4$ into $23\,494$ dyadic boxes
		with the following property. For each terminal box $Q$, seven
		centres $C_{Q,1},\ldots,C_{Q,7}\in2^{-20}\mathbb Z^4$ and a rational
		number $s_Q$, with $0<s_Q\le\sigma$, satisfy the hypotheses of
		Proposition~\ref{prop:condition} for the rational outer set $\Omega_Q$.
		Consequently, every set represented by the parameter cube is covered
		by seven open balls of radius $\rho$.
	\end{proposition}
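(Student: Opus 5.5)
This proposition has two parts. The first is an existence claim about a finite certificate, which is established by exhibiting and checking the certificate. The second is the consequence "every set represented by the parameter cube is covered", which follows from the first by the earlier propositions.

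\emph{Checking the certificate.} I would organize this as a mechanical verification with exact rational arithmetic, following the rules in the supplement.

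First, I check that the supplied list of $23\,494$ dyadic boxes is a genuine subdivision of $[0,1]^4$. It should arise as the leaves of a binary bisection tree, and I would rebuild that tree from the root and confirm that the leaves coincide with the list. This guarantees that the boxes cover the whole cube.

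Next, for each box $Q$ I compute outward-rounded rational enclosures $[r_-,r_+]$ and $[\ell_{ij},u_{ij}]$ of $r$ and of the coordinates of $P_1,P_2,P_3$. These are obtained by propagating interval bounds through \eqref{eq:single-rh}--\eqref{eq:single-contacts}. The square roots are bounded via integer square roots on the grid $2^{-44}\mathbb Z$. The monotone pieces $t$, $t|t|$ and $t_+$ are handled by evaluating at the endpoints of the box. From these enclosures I form $A_i$, $E_i$ and $b_i$, and hence $\Omega_Q$ as in \eqref{eq:rational-body}.

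Then, with the supplied centres $C_{Q,j}$ and the number $s_Q\le\sigma$, I check the hypotheses of Proposition~\ref{prop:condition} in two steps.
\begin{enumerate}
\item Solve the $5\times5$ system for the barycentric weights of zero with respect to a designated five-centre subset. I confirm that the system is nonsingular, so the five centres are affinely independent, and that all weights are strictly positive, so zero is an interior point of their convex hull.
\item Enumerate all $\binom{12}{5}=792$ five-element subsets of the inequalities defining $\mathcal P_Q$. For each nonsingular subsystem, solve it exactly and test feasibility against all twelve inequalities. At every feasible solution, confirm that $F(x,q)<0$.
\end{enumerate}
The remark after Proposition~\ref{prop:condition} guarantees that this enumeration finds every vertex. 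Boundedness is supplied by the positivity check in step 1.

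\emph{Deriving the consequence.} Take any parameter $u\in[0,1]^4$ and choose a box $Q$ containing it. The corresponding outer set $H(r,P_1,P_2,P_3)$ lies in $\Omega_Q$, by the construction of Section~\ref{sec:outer-approximation} and the validity of the interval enclosures. By Proposition~\ref{prop:condition}, $\Omega_Q$ is covered by the open balls $B(C_{Q,j},\sqrt{s_Q})$, each of which lies in $B(C_{Q,j},\rho)$ because $s_Q\le\sigma$. Hence every set represented by the parameter cube is covered by seven open balls of radius $\rho$.

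\emph{Main obstacle.} The hard part is not the verification but finding the certificate in the first place. One needs good centres and a usable subdivision, especially near degenerate parameters such as $\tau$ near $\tfrac14$ or $\tfrac13$, $a=b$, or $r$ near $r_0$, where the slack is small. Near such parameters, fine boxes make the interval errors $E_i$ small.

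My search strategy would be adaptive bisection. On each box I would minimize numerically over centres the maximum distance from points of a sampled outer set to the nearest centre, warm-starting from the centres of the parent box. I would then round the centres to $2^{-20}\mathbb Z^4$ and run the exact test, bisecting the widest coordinate of the box when the test fails.

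A second subtle point is the soundness of the interval rules themselves. In particular, one must justify the bound $|x_j|<\tfrac23$ used to control $E_i$, and the correctness of the rounding directions. I would prove these once, as a lemma about the rules in \texttt{VERIFICATION.md}, rather than per box.
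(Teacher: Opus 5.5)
Your proposal is correct and follows the paper's verification essentially step for step. You rebuild the bisection tree and check that its leaves exhaust the cube, form $\Omega_Q$ from exact outward-rounded enclosures, certify a five-centre simplex with strictly positive barycentric weights of the origin, enumerate all $\binom{12}{5}=792$ vertex bases of $\mathcal P_Q$ testing feasibility and $F<0$, and then conclude via Proposition~\ref{prop:condition} together with $H(r,P_1,P_2,P_3)\subset\Omega_Q$ and $s_Q\le\sigma$; the only checks you leave implicit are $s_Q>0$ and $C_{Q,j}\in 2^{-20}\mathbb Z^4$, which are trivial data checks.
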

	
	\begin{proof}[Verification of Proposition~\ref{prop:computational}]
		The subdivision, centres, and squared radii are recorded in
		\texttt{search\_certificate.json.gz}. The program
		\texttt{seven\_balls\_single.py} checks these data as follows.
		
		First, the program reconstructs the binary subdivision from the whole cube.
		Every split must have both children, every terminal box must have
		the required centres and squared radius, and the input must end
		exactly when all boxes have been accounted for. This establishes
		coverage of the entire cube. Adjacent boxes share their boundary,
		and all interval enclosures are closed.
		
		For each terminal box, the program computes the rational interval
		enclosures using the exact rules given in the supplement and forms
		$\Omega_Q$ as in Section~\ref{sec:outer-approximation}.
		The program checks that five centres form a simplex with the
		origin as an interior point, then enumerates all $\binom{12}{5}=792$
		possible vertex bases of the lifted polytope, solves every
		nonsingular system, and tests feasibility and the strict inequality
		$\norm{x}^2-q<0$ at each feasible solution. These calculations use
		only integer arithmetic, integer square roots, and rational
		Gaussian elimination.
		
		All $23\,494$ terminal boxes pass these checks. The subdivision has
		$23\,493$ splits and maximum depth $29$. The volumes of the terminal
		boxes sum exactly to one, with no unresolved boxes. At $23\,466$ leaves the
		stored squared radius is smaller than $\sigma$; the remaining $28$
		use $\sigma$. Proposition~\ref{prop:condition} gives the required
		covers. The complete verification report and instructions for
		repeating this calculation are included in the supplement.
	\end{proof}
	
	The numerical search is not used by the verifier. In particular,
	the verification does not depend on the sample points, optimization
	logs, initial configurations, or the order in which the subdivision
	was found. The supplied program uses exact arithmetic but is not
	a proof-assistant formalization. The interval rules and finite
	verification in the program implement the mathematical argument above.
	
	\begin{proof}[Proof of Theorem~\ref{thm:main}]
		Replacing $K$ by the closure of $K$, we may assume that $K$ is compact.
		Translate the centre of the circumball of $K$ to the origin and
		let $r$ denote the squared circumradius. Lemma~\ref{lem:contacts} gives $r\le\tfrac25$.
		Proposition~\ref{prop:small} covers $K$ by seven open balls of radius
		$\rho$ when $r\le\tfrac{299}{1000}$, and
		Proposition~\ref{prop:large} does so when $r\ge r_0$.
		
		In the remaining range, $r>\tfrac14$. Proposition~\ref{prop:triangle}
		gives an acute triangle of contact points satisfying~\eqref{eq:tau}.
		The corresponding outer set~\eqref{eq:outer-H} contains $K$ and,
		by Lemma~\ref{lem:single-onto}, is congruent to a member of the
		parameter family. Proposition~\ref{prop:computational} covers this
		member by seven open balls of radius $\rho$.
		
		In all cases, compactness allows the common radius to be decreased
		strictly below $\rho$. The resulting common diameter is strictly
		smaller than $2\rho=0.9999$. Assigning each point to one of the
		balls containing that point also gives the asserted bound on $b_4$.
	\end{proof}
	
	\subsection{Construction of the certificate}\label{sec:finder}
	We describe the search to explain how the finite data were obtained.
	For a parameter box $Q$, the task is to choose centres covering
	the associated common outer approximation $\Omega_Q$. If the search does
	not produce a certified configuration, the box is bisected and both children are kept.
	The exact test of Proposition~\ref{prop:condition} is applied before
	any box is accepted.
	
	The search uses boundary samples of $\Omega_Q$. For a unit vector
	$u$, the radial boundary point is $R(u)u$, where
	\[
		R(u)=\min\left\{\sqrt{r_+},\,
		\min_{1\le i\le3}
		\left(\ip{A_i}{u}+\sqrt{\ip{A_i}{u}^2+b_i}\right)\right\}.
	\]
	Here $b_i>0$, so the origin belongs to $\Omega_Q$, and the formula
	follows by solving the quadratic inequalities along a ray.
	We generate a scrambled Sobol sequence in $[0,1]^4$, apply the inverse
	standard normal distribution function coordinatewise, and normalize
	the resulting vectors to obtain sample directions.
	
	For a finite sample $X$, the numerical objective is
	\[
	f_X(C_1,\ldots,C_7)
	=\max_{x\in X}\min_{1\le j\le7}\norm{x-C_j}^2.
	\]
	All $28$ coordinates of the centres are free. We alternate two
	local procedures: assigning each sample point to a nearest
	centre and approximately minimizing the maximum distance from each
	centre to the sample points assigned to that centre;
	and refining all centres jointly using smooth approximations to
	the minimum and maximum in $f_X$. Poorly covered points from a
	larger sample are added when needed. The supplement specifies
	the smooth objective and the optimization parameters.
	
	Neighbouring parameter boxes often admit the same centres. Before
	starting a new optimization, the search therefore tries configurations
	inherited from the parent box and from previously treated nearby
	parameters. Initial configurations were obtained from selected
	triangles; no symmetry is imposed on the final centres. The
	supplementary seed file records these and further configurations
	from preliminary searches.
	
	After refinement, the centre coordinates are rounded to multiples
	of $2^{-20}$ and submitted to the exact test. Once a configuration
	passes at $s=\sigma$, the centre coordinates are held fixed while numerical
	bisection proposes a smaller squared radius. Every retained value
	$s_Q$ is checked again by the exact test. If no proposal succeeds,
	the already certified value $\sigma$ is retained. Thus numerical
	optimization affects the size of the subdivision and the quality
	of the proposed radii, while coverage is established separately
	by Proposition~\ref{prop:condition}. The supplement gives the
	optimization parameters and commands for a fresh search.
	
	\section*{History of the work and AI use disclosure}
	An earlier version of this work established the weaker bound
	$g_4\le8$ using a universal covering family of polytopes and nonlinear
	optimization to find covering centres. In that version, large language
	models were used only for coding assistance, and the approach required
	substantially more computation.
	
	We subsequently explored the problem with ChatGPT Astra, initially
	seeking purely analytic arguments without computational assistance.
	This led to the approach based on the circumradius and substantially
	reduced the computational search space for coverings of ball polytopes. ChatGPT was also used for coding and editing. We checked
	all mathematical arguments and the verifier code, and made an effort to simplify the exposition and cite references relevant to the methods used.

\end{document}